\documentclass[12pt,reqno]{amsart}
\usepackage{amssymb, amsfonts, amsbsy, latexsym, epsfig, color}
\usepackage{enumerate}

\textwidth  6.5in
\textheight 8.5in
\topmargin0cm
\oddsidemargin0cm
\evensidemargin0cm

\newtheorem{Thm}{Theorem}[section]

\newtheorem{corollary}[Thm]{Corollary}

\newtheorem{proposition}[Thm]{Proposition}
\newtheorem{definition}[Thm]{Definition}
\newtheorem{remark}[Thm]{Remark}

\newtheorem{example}[Thm]{Example}

\newtheorem{theorem}[Thm]{Theorem}

\def\NN{\mathbb{N}}

\def\RR{\mathbb{R}}

\def\cH{{\mathcal{H}}}

\newcommand{\spann}{\mbox{\rm span}}
\newcommand{\norm}[1]{\|#1\|}
\newcommand{\inner}[2]{\langle #1,#2 \rangle}

\begin{document}

\title{Weighted Fusion Frame Construction via Spectral Tetris}
\author[P.G. Casazza and J. Peterson
 ]{Peter G. Casazza and Jesse Peterson}
\address{Department of Mathematics, University
of Missouri, Columbia, MO 65211-4100}

\thanks{The authors were supported by
AFOSR F1ATA00183G003; NSF 1008183; and  NSF ATD 1042701}

\email{Casazzap@missouri.edu; 
 jdpq6c@mail.missouri.edu
}

\begin{abstract}
Fusion frames consist of a sequence of subspaces from a Hilbert space and corresponding positive weights so that the sum of weighted orthogonal projections onto these subspaces is an invertible operator on the space.  Given a spectrum for a desired fusion frame operator and dimensions for subspaces, one existing method for creating unit-weight fusion frames with these properties is the flexible and elementary procedure known as spectral tetris.  Despite the extensive literature on fusion frames, until now there has been no construction of fusion frames with prescribed weights.  In this paper we use spectral tetris to construct more general, arbitrarily weighted fusion frames.  Moreover, we provide necessary and sufficient conditions for when a desired fusion frame can be constructed via spectral tetris.
\end{abstract}

\maketitle

\section{introduction}
Fusion fames (initially frames of subspaces \cite{frames of subspaces}) consist of a sequence of subspaces from a Hilbert space and corresponding positive weights so that the sum of weighted orthogonal projections onto these subspaces is an invertible operator on the space.  Such structure provides a natural mathematical framework for hierachical data processing and lends itself to the design of systems which are robust against noise, data loss, and erasures \cite{B, CKL, KPCL}.  With such desirable properties, fusion frames have found application to problems in sensor networks and distributive processing just to name a few \cite{CF, CKL, G}.  For the interested reader, an in-depth listing of papers on fusion frames may be found at www.fusionframe.org (see also 
www.framerc.org).

Much work on fusion frames has involved constructions of frames with specialized properties \cite{CFMWZ09, MRS}, and specifically \cite{CFHWZ} describes efficient methods for constructing fusion frames with prescribed subspace dimensions and prescribed fusion frame operator eigenvalues via the so-called spectral tetris algorithm.  Each of these construction methods considers only the the unit-weight case.  Also, \cite{CFHWZ} provides necessary and sufficient conditions for when spectral tetris can generate such unit-weight fusion frames with the additional restriction that all fusion frame operator eigevnalues lie within $[2,\infty)$.   But until now, there has been no algorithm for constructing
fusion frames with prescribed weights.  
Our contribution in this paper is a construction of fusion frames with prescribed weights, prescribed subspace dimensions, and prescribed fusion frame spectra.  Further, we present necessary and sufficient conditions for when any fusion frame may be constructed via spectral tetris.  That is, we use spectral tetris to develop constructions for the most general classes of  fusion frames and give necessary and sufficient conditions for when this is possible.

We begin by reviewing frames, fusion frames, and the spectral tetris algorithm in Section \ref{background}.  Section \ref{construction} then investigates fusion frames constructed via spectral tetris and culminates in Theorem \ref{necessary and sufficient} which provides necessary and sufficient conditions for the existence of such fusion frames.  Finally, in Section \ref{concrete}, we modify the spectral tetris algorithm and provide the first concrete constructions of a fusion frame with a prescribed frame operator spectrum, prescribed subspace dimensions, and prescribed weights for each subspace.

\section{Background and Notation}\label{background}

\subsection{Spectral Tetris Frames}
A family of vectors $F=\{f_n\}_{n=1}^N$ is called a {\it frame} for an $M$-dimensional Hilbert space $\cH_M$ if there are constants $0<A\le B < \infty$ satisfying
\[ 
	A \|f\|^2 \le \sum_{n=1}^N |\langle f,f_n\rangle|^2 \le B\|f\|^2, \mbox{ for all } f\in \cH_M,
\]
where $A,B$ are called lower and upper {\it frame bounds} respectively.  Further, if $A=B$, we call such a frame a {\it tight frame}.  The focus of this paper will be real frames.  The {\em synthesis operator} of a real frame $\{f_n\}_{n=1}^N$ in $\RR^M$ is given by $F:\RR^N \rightarrow \RR^M$, $Ff=\sum_{n=1}^Nf(n)f_n$.  From a matrix perspective, $F$ is the $M \times N$ matrix whose columns are the $f_n$'s.  We make no distinction between the family of vectors $F=\{f_n\}_{n=1}^N$ and the induced $M\times N$ synthesis matrix.  Together with the {\it analysis operator} $F^*$, we have the {\it frame operator} $S=FF^*$.  Note the frame bounds $A,B$ are the largest and smallest eigenvalues of $S$.

The original {\it spectral tetris} algorithm developed in \cite{CFMWZ09} is a powerful tool for constructing sparse unit-norm tight frames.  This original algorithm has since been generalized and modified in many ways.  In \cite{CCHKP10}, a straightforward extension allows the construction of unit norm frames having a frame operator with desired eigenvalues $\{\lambda_m\}_{m=1}^M\subseteq [2,\infty)$.  The authors in \cite{CFHWZ} adapt the algorithm in the complex case to use discrete fourier transform matrices as blocks in the construction in order to extend the range of possible eigenvalues.  Most recently \cite{CHKWA} modified the algorithm to produce sparse frames with prescribed frame operator spectra and prescribed frame vector norms, naming this new process prescribed norm sprectral tetris construction (PNSTC).  As PNSTC is a generalized version of the original spectral tetris construction (STC) and includes STC a special case, we will refer to PNSTC simply as STC.  We recommend \cite{CCHKP10, CFMWZ09} for instructive examples concerning how the original algorithm constructs a desired synthesis matrix $F$ from singletons and $2\times 2$ blocks of the form
\[
	A(x)=	\begin{bmatrix}
				\sqrt{\frac{{x}}{2}}		&	\sqrt{\frac{{x}}{2}}	\\
				\sqrt{\frac{{1-x}}{2}}	&	\sqrt{\frac{{1-x}}{2}}
				\end{bmatrix}.
\]
Note the rows of $A(x)$ are orthogonal, the first row square sums to $x$, and columns have unit norm. 

We then recommend \cite{CHKWA} for useful examples on how STC uses singletons and $2\times 2$ blocks of the form
\begin{equation}\label{block}
	A(x,a_\ell,a_{\ell+1})=	\begin{bmatrix}
								\sqrt{\frac{x(a_\ell^2-y)}{x-y}}		&		\sqrt{\frac{x(x-a_\ell^2)}{x-y}}	\\
								\sqrt{\frac{y(x-a_\ell^2)}{x-y}}		&		\sqrt{\frac{y(a_\ell^2-y)}{x-y}}
								\end{bmatrix}
\end{equation}
where $y=a_\ell^2+a_{\ell+1}^2-x$ to generate a desired synthesis matrix with frame vector norms $\{a_n\}_{n=1}^N$.  Note the rows of $A(x,a_\ell,a_{\ell+1})$ are orthogonal, the first row square sums to $x$, and now the columns have norms $a_\ell$ and $a_{\ell+1}$.  Also in \cite{CHKWA}, the authors show such blocks $A(x,a_\ell,a_{\ell+1})$ exist if and only if
\begin{align}
	a_\ell^2+a_{\ell+1}^2\geq x >0 \mbox{ and } \label{bexist1}\\ 
	a_\ell^2,a_{\ell+1}^2>x \mbox{ or } a_\ell^2,a_{\ell+1}^2<x \label{bexist2},
\end{align}
and proceed to develop necessary and sufficient conditions for a frame to be constructed from spectral tetris.

These necessary and sufficient conditions are significant in that spectral tetris cannot construct all possible frames.  Given a sequence of vector norms and eigenvalues for a frame operator, the existence of such a frame is characterized by the Schur-Horn Theorem.  That is, these frames exist when the spectrum majorizes the norms squared.  While other algorithms \cite{eigensteps} are able to construct all such frames, spectral tetris's inability to do so is offset by its simplicity and the sparcity of its constructed frames.  Indeed it is this sparcity which allows STC to be useful in fusion frame construction.

Any frame which may be obtained via STC we call a {\em spectral tetris frame}.  Specifically, two sequences $\{a_n\}_{n=1}^N \subseteq (0,\infty)$ and $\{\lambda_m\}_{m=1}^M\subseteq (0,\infty)$ are {\it spectral tetris ready} if $\sum_{n=1}^N a_n^2= \sum_{m=1}^M \lambda_m$ and if there is a partition $1\leq n_1<\cdots<n_M=N$ such that for all $k=1,\ldots,M-1$
\begin{enumerate}[(a)]
	\item $\sum_{n=1}^{n_k}a_n^2 \leq \sum_{m=1}^k\lambda_m<\sum_{n=1}^{n_k+1}a_n^2$  and
	\item if  $\sum_{n=1}^{n_k}a_n^2< \sum_{m=1}^k\lambda_m$, then $n_{k+1}-n_k\geq 2$ and 
		\[
			a_{n_k+2}^2\geq \sum_{m=1}^k\lambda_m-\sum_{n=1}^{n_k}a_n^2.
		\]
\end{enumerate}

\begin{theorem} (Theorem 3.6 in \cite{CHKWA}) \label{ready}
STC can construct a frame with spectrum $\{\lambda_m\}_{m=1}^M\subseteq (0,\infty)$ and vector norms $\{a_n\}_{n=1}^N \subseteq (0,\infty)$ if and only if there exists a permutation of these sequences such that they are spectral tetris ready.
\end{theorem}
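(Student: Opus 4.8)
The plan is to prove the two implications separately: the ``if'' direction by literally running the STC algorithm on the given (permuted) data and checking that it succeeds, and the ``only if'' direction by reading a partition off the block structure of an arbitrary spectral tetris frame. Throughout I may assume, as stated in the excerpt, that a $2\times 2$ block $A(x,a_\ell,a_{\ell+1})$ of the form \eqref{block} exists exactly when \eqref{bexist1} and \eqref{bexist2} hold, and that its rows are orthogonal, its first row square-sums to $x$, and its columns have norms $a_\ell,a_{\ell+1}$.

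For \emph{sufficiency}, suppose after relabelling that $\{a_n\}_{n=1}^N$ and $\{\lambda_m\}_{m=1}^M$ are spectral tetris ready with witnessing partition $1\le n_1<\cdots<n_M=N$. I would feed these to STC and track, as columns $n=1,2,\dots$ are processed, the index $m$ of the row currently being filled and its residual budget $r$, starting from $m=1$, $r=\lambda_1$: whenever $a_n^2\le r$, place column $n$ as a singleton of norm $a_n$ in row $m$ and replace $r$ by $r-a_n^2$; when $r$ hits $0$, advance to row $m+1$ with fresh budget $\lambda_{m+1}$; when instead $0<r<a_n^2$, insert the block $A(r,a_n,a_{n+1})$ across rows $m,m+1$ and columns $n,n+1$, completing row $m$ and continuing in row $m+1$ with budget $\lambda_{m+1}-(a_n^2+a_{n+1}^2-r)$. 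The content of the argument is an induction on $k$ establishing the invariant that, once row $k$ is filled, exactly $n_k$ columns have been used if row $k$ closed on a singleton and $n_k+1$ if it closed on a block, and the budget carried into row $k+1$ lies in $[0,\lambda_{k+1}]$. Condition (a) is precisely what makes $n_k$ the last column that fits inside the first $k$ rows, so a singleton never straddles a row boundary except through a genuine overflow; and when such an overflow occurs, the right-hand inequality in (a) gives $a_{n_k+1}^2>x$, where $x:=\sum_{m\le k}\lambda_m-\sum_{n\le n_k}a_n^2$, condition (b) gives $a_{n_k+2}^2\ge x$, and $n_{k+1}-n_k\ge 2$ together with (a) for $k+1$ gives $a_{n_k+1}^2+a_{n_k+2}^2-x\le\lambda_{k+1}$ — these are exactly \eqref{bexist1}, \eqref{bexist2} for the block $A(x,a_{n_k+1},a_{n_k+2})$ and the bound keeping the carried budget $\le\lambda_{k+1}$. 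The edge case $a_{n_k+2}^2=x$ simply degenerates the block into two singletons and is harmless. Since $\sum a_n^2=\sum\lambda_m$, the run terminates having filled all $M$ rows with all $N$ columns, producing an $F$ with pairwise orthogonal rows of squared norms $\lambda_m$ and columns of norms $a_n$, so $FF^*$ is diagonal with entries $\lambda_m$, as required.

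For \emph{necessity}, given any frame $F$ produced by STC, I would use the fact that, up to the permutation of columns and rows that STC itself realizes, $F$ is a concatenation of singletons and blocks \eqref{block} whose supports descend monotonically through the rows with consecutive building blocks overlapping in at most one row. Take the permuted sequences and set $n_k:=\max\{n:\sum_{j\le n}a_j^2\le\sum_{m\le k}\lambda_m\}$, so (a) holds by maximality. When $\sum_{j\le n_k}a_j^2<\sum_{m\le k}\lambda_m$, the monotone shape forces columns $1,\dots,n_k$ to have support inside rows $1,\dots,k$ and forces row $k$ to be completed by a block $A(x,a_{n_k+1},a_{n_k+2})$ across rows $k,k+1$, with $x=\sum_{m\le k}\lambda_m-\sum_{n\le n_k}a_n^2$; since STC only inserts such a block when the triggering column has squared norm exceeding the residual budget, $a_{n_k+1}^2>x$, and then \eqref{bexist2} forces $a_{n_k+2}^2>x$, which is the inequality in (b), while ``no row is overfilled'' gives $a_{n_k+1}^2+a_{n_k+2}^2-x\le\lambda_{k+1}$, hence $n_{k+1}\ge n_k+2$. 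Thus the permuted sequences are spectral tetris ready.

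The routine parts are the verifications of \eqref{bexist1}, \eqref{bexist2} and the inner-product identities showing the blocks do what is claimed. The real work, and where I expect the main obstacle to lie, is the bookkeeping that glues the local block constructions into a global one: in the sufficiency direction one must maintain across every row boundary the invariant that the carried residual budget is both nonnegative and at most $\lambda_{k+1}$ — this is exactly where the gap condition $n_{k+1}-n_k\ge 2$ is used, together with condition (a) at level $k+1$ — and in the necessity direction one must pin down precisely that STC output always has the ``almost block diagonal'' shape making the cut indices $n_k$ well defined and the straddling blocks of the $a^2>x$ type. Getting those structural claims exactly right, along with the degenerate block and the role of the admissible permutations, is the delicate point.
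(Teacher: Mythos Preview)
The paper does not actually prove this statement: Theorem~\ref{ready} is quoted verbatim from \cite{CHKWA} (as Theorem~3.6 there) and is used as a black box throughout the rest of the paper. There is therefore no ``paper's own proof'' against which to compare your attempt.

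That said, your approach is exactly the natural one and is essentially how the result is established in the cited source: for sufficiency, run STC row by row and use the partition indices $n_k$ from the spectral-tetris-ready definition to certify at each boundary that either a singleton completes the row or a well-defined block $A(x,a_{n_k+1},a_{n_k+2})$ exists and does not overfill row $k+1$; for necessity, read the indices $n_k$ off the block structure of any successful STC run and verify conditions (a) and (b). Your identification of the key invariants is correct, and your handling of the degenerate block case is appropriate.

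One small caution for the necessity direction: you define $n_k:=\max\{n:\sum_{j\le n}a_j^2\le\sum_{m\le k}\lambda_m\}$ directly from the permuted data, and then argue that the STC block structure forces (b). Be careful that this $n_k$ is at least $1$; in particular you are implicitly using that a successful STC run in the sense of the definition begins with $a_1^2\le\lambda_1$ (i.e., the very first column is a singleton), which is how ``spectral tetris ready'' is set up in \cite{CHKWA} and in the algorithm's stated preconditions in Table~\ref{fig:STC}. If one allowed STC to open with a block in columns $1,2$ this argument would need adjustment, so it is worth making that assumption explicit. Apart from this, your bookkeeping is sound.
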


For convenience, the most general STC algorithm is given in table \ref{fig:STC}.  We note as STC creates an $M\times N$ synthesis matrix $F$, each step corresponds to a pair $(m,n)$.  We will refer to $(m,n)$ as the cursor location as this represents the row and column position in $F$ for which STC is creating entries.  Note that given spectral tetris ready sequences $\{a_n\}_{n=1}^N$ and $\{\lambda_m\}_{m=1}^M$, if the cursor is in row $r$, STC will insert a scalar ({\it singleton}) at the cursor location until column $\ell$ where
\begin{align}
	&\sum_{n=1}^{\ell-1} a_n^2 \leq \sum_{m=1}^r \lambda_m< \sum_{n=1}^{\ell} a_n^2. \label{endrow}
\end{align}
If the left side of (\ref{endrow}) holds as an equality, STC inserts another singleton and the cursor proceeds to row $r+1$, while if it does not hold as an equality, STC will create a $2\times 2$ block, $A=A(x,a_{\ell+1},a_{\ell+2})$ of the form (\ref{block}).  We refer to a column of a $2\times 2$ block as a {\it doubleton}.  

\begin{table}[h]
\centering
\framebox{
\begin{minipage}[h]{6.0in}
\vspace*{0.3cm}
{\sc \underline{STC: Spectral Tetris Construction}}

\vspace*{0.4cm}

{\bf Parameters:}\\[-3ex]
\begin{itemize}
\item Dimension $M\in\NN$.
\item Number of frame elements $N\in\NN$.
\item Eigenvalues $\{\lambda_m\}_{m=1}^M$  and norms of the frame vectors $\{a_n\}_{n=1}^N$ such that 
$(\lambda_m)_{m=1}^M$  and $(a_n^2)_{n=1}^N$ are spectral tetris ready.
\end{itemize}

{\bf Algorithm:}\\[-3ex]
\begin{itemize}
\item[1)] Set $n=1$.
\item[2)] For $m=1,\ldots,M$ do
\item[3)] \hspace*{0.5cm}Repeat
\item[4)] \hspace*{1cm}If $\lambda_m\geq a_n^2$ then
\item[5)] \hspace*{1.5cm}$f_n=a_ne_m$.
\item[6)] \hspace*{1.5cm}$\lambda_m=\lambda_m-a_n^2$.
\item[7)] \hspace*{1.5cm}$n=n+1$.
\item[8)] \hspace*{1cm}else
\item[9)] \hspace*{1.5cm}If $2\lambda_m=a_n^2+a_{n+1}^2$, then
\item[10)] \hspace*{2cm}$f_{n}=
\sqrt{\frac{\lambda_m}{2}}\cdot (e_m+e_{m+1}).$
\item[11)] \hspace*{2cm}$f_{n+1}=
\sqrt{\frac{\lambda_m}{2}}\cdot (e_m-e_{m+1}).$
\item[12)] \hspace*{1.5cm}else
\item[13)] \hspace*{2cm}$y=a_n^2+a_{n+1}^2-\lambda_m$.
\item[14)] \hspace*{2cm}$f_{n}=
\sqrt{\frac{\lambda_m(a_n^2-y)}{\lambda_m-y}}\cdot e_m
+\sqrt{\frac{y(\lambda_m-a_n^2)}{\lambda_m-y}}\cdot e_{m+1}.$
\item[15)] \hspace*{2cm}$f_{n+1}=
\sqrt{\frac{\lambda_n(\lambda_m-a_n^2)}{\lambda_m-y}}\cdot e_m
-\sqrt{\frac{y(a_n^2-y)}{\lambda_m-y}}\cdot e_{m+1}.$
\item[16)] \hspace*{1.5cm}end.
\item[17)] \hspace*{1.5cm}$\lambda_{m+1}=\lambda_{m+1}-(a_n^2+a_{n+1}^2-\lambda_m)$.
\item[18)] \hspace*{1.5cm}$\lambda_m=0$.
\item[19)] \hspace*{1.5cm}$n=n+2$.
\item[20)] \hspace*{1cm}end.
\item[21)] \hspace*{0.5cm}until $\lambda_m=0$.
\item[22)] end.
\end{itemize}

{\bf Output:}\\[-3ex]
\begin{itemize}
\item Frame $(f_n)_{n=1}^N\subseteq\RR^M$.
\end{itemize}
\vspace*{0.1cm}
\end{minipage}
}
\vspace*{0.2cm}
\caption{The STC algorithm for constructing a frame with prescribed spectrum and norms.}
\label{fig:STC}
\end{table}

We conclude our review of spectral tetris with a short example of the STC algorithm in execution.

\begin{example}
We run STC on the spectral tetris ready sequences of norms $\{a_n\}_{n=1}^4=\{1,\sqrt{3},\sqrt{2},\sqrt{2}\}$ and eigenvalues $\{\lambda_m\}_{m=1}^2=\{2,6\}$ to create a $2\times 4$ synthesis matrix $F$.  The cursor begins at $(1,1)$ where $a_1^2=1\leq 2=\lambda_1$, and so STC inserts a singleton.  The cursor moves to $(1,2)$.
\begin{center} \begin{tabular}{cccc||c} $1$ & $3$ & $2$ &$2$	& \\ \hline\hline 
																			$\odot$&* &*&*& $2$ \\ 
																			*&*&*&*&$6$ 
								\end{tabular}
	$\rightarrow$
								\begin{tabular}{cccc||c} $1$ & $3$ & $2$ &$2$	& \\ \hline\hline 
																			$1$&$\odot$ &*&*& $2$ \\ 
																			$0$&*&*&*&$6$ 
								\end{tabular}
		
\end{center} 
Now (\ref{endrow}) is satisfied since $a_1^2=1\leq \lambda_1=2<a_1^2+a_2^2=4$.  Thus STC inserts two doubletons comprising a block of the form (\ref{block}).  The cursor moves to $(2,4)$.
\begin{center} 	\begin{tabular}{cccc||c} $1$ & $3$ & $2$ &$2$	& \\ \hline\hline 
																			$1$&$\odot$ &*&*& $2$ \\ 
																			$0$&*&*&*&$6$ 
								\end{tabular}
		$\rightarrow$
								\begin{tabular}{cccc||c} $1$ & $3$ & $2$ &$2$	& \\ \hline\hline 
																			$1$& $\sqrt{\frac{1}{3}}$ & $\sqrt{\frac{2}{3}}$ &$0$& $2$ \\ 
																			$0$& $\sqrt{\frac{8}{3}}$ & $-\sqrt{\frac{4}{3}}$ &$\odot$&$6$ 
								\end{tabular}
\end{center} 
Then $\sum_{n=1}^4 a_n^2=\sum_{m=1}^2\lambda_m=8$, and STC completes the synthesis matrix $F$ by adding a singleton.
\[
F= \begin{bmatrix}
																				1& \sqrt{\frac{1}{3}}& \sqrt{\frac{2}{3}}&0\\
																				0&\sqrt{\frac{8}{3}}& -\sqrt{\frac{4}{3}}&\sqrt{2}
																			 \end{bmatrix}
\]
\end{example}

\subsection{Spectral Tetris Fusion Frames}
As a frame operator is a sum of rank one projections, fusion frames are a natural generalization of a frame of vectors to a frame of subspaces.  For an $M$-dimensional Hilbert space $\cH_M$, subspaces $\{W_k\}_{k=1}^K$, and positive weights $\{v_k\}_{k=1}^K$, $\{W_k,v_k\}_{k=1}^K$ is a {\em fusion frame} for $\cH_M$ if there are constants $0<A\leq B < \infty$ so that
\[ 
	A \|f\|^2 \le \sum_{k=1}^K v_k^2\|P_kf\|^2 \le B \|f\|, \mbox{ for all } f\in \cH_M,
\]
where $P_k$ is the orthogonal projection onto $W_k$.  We call $A,B$ the {\em fusion frame bounds}, and if $A=B$, this is a {\em tight fusion frame}.  The fusion frame operator $S:\cH_M \rightarrow \cH_M$ is then given by $S=\sum_{k=1}^K \nu_k^2 P_k$.  

Since spectral tetris outputs conventional frames, we need a connection between these and fusion frames.

\begin{theorem}\label{tight}
For $k\in\{1,\ldots,K\}$, let $\nu_k>0$, let $W_k$ be a subspace of $\RR^M$, and let $\{f_{k,j}\}_{j=1}^{d_k}$ be a tight frame for $W_k$ with tight frame bound $\nu_k^2$.  Then the following are equivalent.
\begin{enumerate}[(a)]
	\item $\{W_k,\nu_k\}_{k=1}^K$ is a fusion frame whose fusion frame operator has spectrum $\{\lambda_m\}_{m=1}^M$.
	\item $\{f_{k,j}\}_{k=1,j=1}^{K,d_k}$ is a frame whose frame operator has spectrum $\{\lambda_m\}_{m=1}^M$.
\end{enumerate}
\end{theorem}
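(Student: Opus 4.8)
The plan is to show that (a) and (b) are not merely equivalent but literally describe the \emph{same} operator on $\RR^M$; the spectral claim and the frame/fusion-frame property then fall out immediately. The only substantive point is the bookkeeping identity that a tight frame for a subspace, viewed inside the ambient space, has partial frame operator equal to a scalar multiple of the projection onto that subspace.

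\textbf{Step 1: A local identity.} First I would prove that if $\{f_{k,j}\}_{j=1}^{d_k}$ is a tight frame for $W_k$ with tight frame bound $\nu_k^2$, then the partial frame operator
\[
	S_k:=\sum_{j=1}^{d_k}\inner{\cdot}{f_{k,j}}f_{k,j}
\]
satisfies $S_k=\nu_k^2 P_k$ as an operator on all of $\RR^M$. To see this, decompose an arbitrary $f\in\RR^M$ as $f=P_kf+(I-P_k)f$. On $W_k$ the tightness hypothesis gives $S_k g=\nu_k^2 g=\nu_k^2 P_k g$ for every $g\in W_k$; on $W_k^\perp$, each $\inner{g}{f_{k,j}}=0$ because $f_{k,j}\in W_k$, so $S_k g=0=\nu_k^2 P_k g$. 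By linearity $S_k=\nu_k^2 P_k$.

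\textbf{Step 2: Summing over $k$.} The frame operator of the concatenated family $\{f_{k,j}\}_{k=1,j=1}^{K,d_k}$ is, by definition, $S=\sum_{k=1}^K S_k$, and by Step 1 this equals $\sum_{k=1}^K \nu_k^2 P_k$, which is precisely the fusion frame operator of $\{W_k,\nu_k\}_{k=1}^K$. Hence the two operators in (a) and (b) are one and the same.

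\textbf{Step 3: Conclusion.} Now invoke the standard characterizations: a finite family of vectors is a frame for $\RR^M$ if and only if its (positive semidefinite) frame operator is invertible, and $\{W_k,\nu_k\}_{k=1}^K$ is a fusion frame if and only if $\sum_k\nu_k^2 P_k$ is invertible. Since these operators coincide, one is invertible exactly when the other is, and in that case they have the same spectrum (with multiplicities) $\{\lambda_m\}_{m=1}^M$. This gives (a) $\Leftrightarrow$ (b). I do not expect any real obstacle here; the only place demanding a moment of care is Step 1, where one must use that ``tight frame for $W_k$'' means the tight frame inequality holds for all $f\in W_k$ (not all of $\RR^M$) together with the fact that the frame vectors lie in $W_k$, so that $S_k$ annihilates $W_k^\perp$.
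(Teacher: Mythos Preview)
Your argument is correct and is essentially the paper's own proof: both establish the operator identity $\sum_k \nu_k^2 P_k = \sum_k\sum_j \inner{\cdot}{f_{k,j}}f_{k,j}$ and read off the equivalence from there. The paper compresses your Steps~1--3 into a single displayed equality $Sf=\sum_k \nu_k^2 P_k f=\sum_k\sum_j\inner{f}{f_{k,j}}f_{k,j}=S'f$, so your version simply unpacks the middle equality more carefully.
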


\begin{proof}
Let $S$ be the fusion frame operator for $\{W_k,\nu_k\}_{k=1}^K$ and $S'$ the frame operator for $\{f_{k,j}\}_{k=1,j=1}^{\ K,\ \ d_k}$.  Letting $P_k$ be the orthogonal projection onto $W_k$, we have for any vector $f\in \RR^M$,
\begin{equation}\label{basis1}
	Sf=\sum_{k=1}^K \nu_k^2 P_k f = \sum_{k=1}^K \sum_{j=1}^{d_k} \inner{f}{f_{k,j}}f_{k,j}=S'f.
\end{equation}
\end{proof}
Often we choose orthonormal bases for each subspace $W_k$.  Indeed if $\{f_{k,j}\}_{j=1}^{d_k}$ is an orthonormal basis for $W_k$, then (\ref{basis1}) becomes
\begin{equation}\label{basis}
	Sf=\sum_{k=1}^K \nu_k^2 P_k f = \sum_{k=1}^K \sum_{j=1}^{d_k} \nu_k^2 \inner{f}{f_{k,j}}f_{k,j}= \sum_{k=1}^K \sum_{j=1}^{d_k} \inner{f}{{\nu_k}f_{k,j}}{\nu_k}f_{k,j},
\end{equation}
and thus every fusion frame arises from a conventional frame partitioned into equal-norm, orthogonal sets. 

In \cite{CFHWZ}, where the authors assume unit-norm vectors and unit weights, this leads to the definition of a {\em spectral tetris fusion frame}: a fusion frame $\{W_k,\nu_k\}_{k=1}^K$ arising from a spectral tetris frame $F=\{f_n\}_{n=1}^N$ and a partition $\{J_k\}_{k=1}^K$ of $\{1,\ldots,N\}$ so that $\{f_n\}_{n\in J_k}$ is an orthonormal basis for $W_k$.  This is a special case of Theorem \ref{tight}.  However, since our purpose is to drop the assumptions of unit-norm vectors and unit weights, it is not clear if similar orthogonality properties are sufficient to encompass all fusion frames arising from a spectral tetris construction.  Since STC allows us to set more arbitrary vector norms, in the more general case of fusion frames with non-unit weights, we instead give the following definition:

\begin{definition} Suppose $\{W_k,\nu_k\}_{k=1}^K$ is a fusion frame with frame operator $S$.  We say $\{W_k,\nu_k\}_{k=1}^K$ is a spectral tetris fusion frame if there exists a spectral tetris frame $F=\{f_n\}_{n=1}^N$ with frame opertator $S$, and if there exists a partition $\{J_k\}_{k=1}^K$ of $\{1,\ldots,N\}$ such that $\{f_n\}_{n\in J_k}$ is a tight frame for $W_k$ with tight frame bound $\nu_k^2$.  Further, we say $F$ and $\{J_k\}_{k=1}^K$ generate $\{W_k,\nu_k\}_{k=1}^K$.
\end{definition}

Since every fusion frame arises from a partition of a traditional frame, we introduce additional notation to easily identify subfamilies of frame vectors.  Given a frame $F=\{f_n\}_{n=1}^N$ and a subset $J\subseteq \{1,\ldots,N\}$, we denote the subfamily $\{f_n:n\in J\}=F_J$.  Since $F_J$ is a frame for its span, we again do not distiguish this set from its induced synthesis matrix.


\section{Weighted Fusion Frame Construction} \label{construction}

Given a sequence of eigenvalues for a fusion frame operator and a sequence of weights with corresponding dimensions, we wish to construct a fusion fusion frame with these properties.  Using spectral tetris, the spectrum for our construction is fixed, however, vector norms are not, and choices for our norms are not unique.  This is clear by the following simple example.

\begin{example}
Consider $\RR^2$ and a sequence of weights $(\sqrt{2},1)$ with corresponding subspace dimensions $(2,1)$.  Also, suppose we want the fusion frame operator to have eigenvalues $(2,3)$.  We use STC to produce a variety of frames whose frame operator has this spectrum:
\begin{enumerate}[(a)]
\item \label{a1} The sequence of norms $(\sqrt{2},\sqrt{2},1)$ produces the frame

\[
	\begin{bmatrix}
		f_1	&f_2	&f_3\\
	\end{bmatrix}
	=
	\begin{bmatrix}
		\sqrt{2}	&0	&0\\
		0	&\sqrt{2}	&1\\
	\end{bmatrix}.
\]
\item \label{b1} The sequence of norms $(1,1,\sqrt{2},1)$ produces the frame
\[
	\begin{bmatrix}
		g_1	&g_2	&g_3	&g_4\\
	\end{bmatrix}
	=
	\begin{bmatrix}
		1	&1	&0	&0\\
		0	&0	&\sqrt{2}	&1\\
	\end{bmatrix}.
\]
\item \label{c1}The sequence of norms $\left(1,\sqrt{\frac{3}{2}},\sqrt{\frac{3}{2}},1\right)$ produces the frame
\[
	\begin{bmatrix}
		h_1	&h_2	&h_3	&h_4\\
	\end{bmatrix}
	=
	\begin{bmatrix}
		1	&\sqrt{\frac{1}{2}}	&\sqrt{\frac{1}{2}}	&0\\
		0	&1	&-1	&1\\
	\end{bmatrix}.
\]
\end{enumerate}
A fusion frame $\{W_k,\nu_k\}_{k=1}^2$, $\nu_1=\sqrt{2},\nu_2=1$, is then obtained via STC by defining $W_1 = \spann(f_1,f_2)$, $W_2=\spann(f_3)$ or $W_1 = \spann(g_1,g_2,g_3)$, $W_2=\spann(g_4)$ or $W_1 = \spann(h_1,h_2,h_3)$, $W_2=\spann(h_4)$.  All three generate the same fusion frame.
\end{example}

The differences amoung constructions in this example are superficial; (\ref{b1}) simply splits a vector from (\ref{a1}) into two colinear vectors, and (\ref{c1}) takes two orthogonal vectors from (\ref{b1}) and combines them into a $2\times 2$ block spanning the same $2$-dimensional space.  In fact, all spectral tetris frames which generate a given fusion frame are related in this manner.  Before we state and prove this more formally as Theorem \ref{thm1}, we first give a useful proposition.

\begin{proposition} \label{prop1}
Let $F=\{f_n\}_{n=1}^N$ be an $M\times N$ spectral tetris frame.  Suppose $J\subseteq \{1,\ldots,N\}$ such that $F_J$ is a tight frame for $W_J=\spann(F_J)$.  Let $f_j\in F_J$ be a doubleton; without loss of generality, say $f_j,f_{j+1}$ contain a $2\times 2$ block.  If there exists some $f_{j'} \in F_J$, $j\neq j'$ such that $\inner{f_j}{f_{j'}}\neq 0$, then $f_{j+1}\in F_J$.
\end{proposition}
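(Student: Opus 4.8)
The plan is to argue by contradiction: assuming $f_{j+1}\notin F_J$, I will show that $f_j$ is orthogonal to every other member of $F_J$, which contradicts the hypothesis. Throughout I use the standard structural features of an STC frame: each $f_n$ is either a singleton $a_n e_m$ or a doubleton supported on two consecutive coordinates, with both of those coordinates nonzero; the two rows of each $2\times 2$ block are mutually orthogonal; and for each pair of consecutive rows there is at most one $2\times 2$ block, so every doubleton whose support meets a coordinate $m$ belongs either to the (unique) block straddling rows $m-1,m$ or to the (unique) block straddling rows $m,m+1$. Say the block containing $f_j$ sits in rows $m,m+1$, so $f_j(m)\neq 0\neq f_j(m+1)$; and we fix $f_j,f_{j+1}$ as the left and right columns of this block (the general case follows by symmetry).

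The main tool is the frame operator $S_J:=F_JF_J^{*}$. Since $F_J$ is a tight frame for $W_J=\spann(F_J)$ with some bound $c>0$, we have $S_J=c\,P_{W_J}$, where $P_{W_J}$ is the orthogonal projection onto $W_J$; thus $P_{W_J}=c^{-1}S_J$ is a self-adjoint idempotent, and since every frame vector is supported on at most two consecutive coordinates, both $S_J$ and $P_{W_J}$ are tridiagonal. Now the only vectors of $F$ whose support contains both $m$ and $m+1$ are the two columns of the block in rows $m,m+1$, and of those only $f_j$ lies in $F_J$, so $(S_J)_{m,m+1}=f_j(m)f_j(m+1)\neq 0$ and hence $(P_{W_J})_{m,m+1}\neq 0$. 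From this I extract two consequences. First, expanding $(P_{W_J}^{2})_{m-1,m+1}$ and $(P_{W_J}^{2})_{m,m+2}$ and discarding the terms killed by tridiagonality, the only surviving summand in each is $(P_{W_J})_{m-1,m}(P_{W_J})_{m,m+1}$, respectively $(P_{W_J})_{m,m+1}(P_{W_J})_{m+1,m+2}$; since $P_{W_J}^{2}=P_{W_J}$ these equal $(P_{W_J})_{m-1,m+1}=0$ and $(P_{W_J})_{m,m+2}=0$, and dividing by $(P_{W_J})_{m,m+1}\neq 0$ gives $(P_{W_J})_{m-1,m}=(P_{W_J})_{m+1,m+2}=0$, hence $(S_J)_{m-1,m}=(S_J)_{m+1,m+2}=0$. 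Second, since $P_{W_J}$ is a self-adjoint idempotent with $(P_{W_J})_{m,m+1}\neq 0$, neither $e_m$ nor $e_{m+1}$ lies in $W_J$ (if, say, $e_m\in W_J$ then $P_{W_J}e_m=e_m$, forcing $(P_{W_J})_{m,m+1}=\inner{e_m}{e_{m+1}}=0$).

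Finally I eliminate every vector that could be non-orthogonal to $f_j$. A vector $f_{j'}\in F_J$ with $j'\neq j$ and $\inner{f_j}{f_{j'}}\neq 0$ must have support meeting $\{m,m+1\}$, so it is either a singleton on $e_m$ or $e_{m+1}$, or a column of the block straddling rows $m-1,m$, or a column of the block straddling rows $m+1,m+2$ (it cannot be $f_{j+1}$, which is not in $F_J$ by assumption). A singleton on $e_m$ or $e_{m+1}$ would put $e_m$ or $e_{m+1}$ in $W_J$, which we have excluded. If both columns of the block straddling rows $m-1,m$ were in $F_J$, their span would be all of $\spann(e_{m-1},e_m)$, again putting $e_m\in W_J$; if exactly one such column were in $F_J$, then $(S_J)_{m-1,m}$ would equal the product of its two nonzero coordinates, contradicting $(S_J)_{m-1,m}=0$. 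The block straddling rows $m+1,m+2$ is excluded symmetrically, using $(S_J)_{m+1,m+2}=0$ and $e_{m+1}\notin W_J$. Hence no such $f_{j'}$ exists, contradicting the hypothesis, and therefore $f_{j+1}\in F_J$.

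I expect the only delicate point — and the one I would state most carefully — to be the combinatorial claim that the vectors of $F$ interacting with the doubleton $f_j$ come from the short, explicit list above; this rests on the STC fact that at most one $2\times 2$ block is placed between any two consecutive rows. The projection manipulations ($P_{W_J}^{2}=P_{W_J}$ together with tridiagonality and self-adjointness) are then routine, and the boundary cases $m=1$ or $m+1=M$ only shorten the list of candidates and are handled trivially.
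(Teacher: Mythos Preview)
Your argument is correct and takes a genuinely different route from the paper's. The paper proceeds by case analysis on $f_{j'}$: if $f_{j'}$ is a singleton it obtains $\spann(e_r,e_{r+1})\subseteq W_J$ directly, projects $F_J$ onto that plane, and reads off row-orthogonality; if $f_{j'}$ is a doubleton it builds an iterative chain of doubletons $f_q,f_{q'},\dots$ (each found by projecting onto successively larger spans and invoking the ``a tight frame with $d$ nonzero vectors for a $d$-dimensional space is orthogonal'' principle) until a full block is forced into $F_J$, at which point it bootstraps back to $\spann(e_r,e_{r+1})\subseteq W_J$. Your approach sidesteps this chain entirely: from $P_{W_J}=c^{-1}S_J$ being a tridiagonal self-adjoint idempotent with $(P_{W_J})_{m,m+1}\neq 0$, the single identity $(P_{W_J}^2)_{m\mp 1,m+1\pm 0}=0$ immediately kills the neighboring off-diagonal entries, and the observation $e_m,e_{m+1}\notin W_J$ finishes the job uniformly. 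What this buys you is a short, non-iterative argument that treats the singleton and doubleton cases for $f_{j'}$ on equal footing; what the paper's approach buys is a more hands-on geometric picture of how the spans of successive blocks stack up inside $W_J$. The one structural fact both proofs lean on is exactly the one you flagged: between any two consecutive rows STC places at most one $2\times 2$ block, so the list of vectors whose support meets $\{m,m+1\}$ is the short explicit one you wrote down, and each entry of such a block is nonzero.
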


\begin{proof}
Suppose $\inner{f_j}{f_{j'}}\neq 0$.  Let $r,r+1$ denote the two rows over which $f_j, f_{j+1}$ contain a block.  We will assume $j'<j$ as the other case is proven similarly.  Let $\{e_m\}_{m=1}^M$ be the eigenvectors of the frame operator $FF^*$ indexed in the same order as their respective eigenvalues in STC.  We consider two cases.

Case I: Suppose $f_{j'}$ is a singleton.  Since $\inner{f_j}{f_{j'}}\neq 0$ and $j'<j$, this singleton appears in row $r$, and then $\spann(e_r,e_{r+1})\subseteq W_J$.  Let $P$ be the orthogonal projection onto $\spann(e_r,e_{r+1})$.  Since $F_J$ is a tight frame for $W_J$, $PF_J$ is a tight frame for $\spann(e_r,e_{r+1})$.  Specifically $PF_J(PF_J)^*=cI$ on $\spann(e_r,e_{r+1})$ implying rows $r$ and $r+1$ of $F_J$ orthogonal.  Due to sparcity of STC, orthogonality can only be achieved by including the other half of the block associated with $f_j$.  That is $f_{j+1}\in F_J$.

Case II: Suppose $f_{j'}$ is a doubleton, and without loss of generality let $f_{j'+1}$ denote the other doubleton of the $2\times 2$ block.  We may further assume there does not exist any $f_n \in F_J$ which is a singleton in rows $r$ or $r+1$, for otherwise Case I would apply.  Our goal will be to show $\spann(e_r,e_{r+1})\subseteq W_J$ still holds.  Then we can project $F_J$ onto $\spann(e_r,e_{r+1})$, consider the resulting tight frame, and the result will follow as in Case I.

Note $f_{j'},f_{j'+1}$ form a block over rows $r-1$ and $r$.  If $f_{j'+1}\in F_J$ then $\spann(e_{r-1},e_{r})\subseteq W_J$, and all together $f_{j'},f_{j'+1},f_j\in F_J$ implies 
\begin{equation}\label{build1}
	\spann(e_{r},e_{r+1})\subseteq \spann(e_{r-1},e_{r},e_{r+1})\subseteq W_J.
\end{equation}
This was our goal; we would be done.  

So suppose $f_{j'+1}\notin F_J$; Case I now necessitates there are also no singletons in row $r-1$ of $F_J$.  Let $P_1$ be the orthogonal projection onto $\spann(f_j,f_{j'})\subseteq W_J$, and consider the tight frame $P_1F_J$.  Now $P_1F_J$ contains a non-zero vector $P_1f_q$, $q\neq j,j'$ for otherwise $P_1F_J$ is tight frame consisting of $2$ non-zero vectors, $f_j$ and $f_{j'}$, spanning a $2$ dimensional space.  This would require $f_j,f_{j'}$ be orthogonal, a contradiction.  Further $f_q$ must be a doubleton since $P_1$ projects onto a subspace of $\spann(e_{r-1},e_r,e_{r+1})$, and $F_J$ has no singletons in rows $r-1,r,r+1$.  So $f_q\in F_J$ and supposing $f_{q+1}$ completes a block with $f_q$, this block must occur over rows $r+1,r+2$ or $r-2,r-1$.  We assume the latter as the other case is handled similarly.  Now if $f_{q+1}\in F_J$, then $\spann(e_{r-2},e_{r-1})\subseteq W_J$, and $f_q,f_{q+1},f_{j'}\in F_J$ give
\begin{equation}\label{build2}
	\spann(e_{r-1},e_{r})\subseteq \spann(e_{r-2},e_{r-1},e_r)\subseteq W_J
\end{equation}
Then (\ref{build2}) and $f_{j'},f_{j'+1},f_j\in F_J$ combine exactly as in (\ref{build1}).  We would be done.

So suppose $f_{j'+1},f_{q+1} \notin F_J$; Case I again implies there are also no singletons in row $r-2$.  We define $P_2$ as the orthogonal projection onto $\spann(f_j,f_{j'},f_q)\subseteq W_J$ and consider the tight frame $P_2F_J$.  $P_2F_J$ contains a non-zero $P_2f_{q'}$, $q'\neq q,j,j'$.  Otherwise $P_2F_J$ is a tight frame with $3$ nonzero vectors, $f_j,f_{j'},f_g$, spanning a $3$-dimensional space requiring orthogonality, a contradiction.  As before, $f_{q'}$ must be a doubleton since rows $r-2$ through $r$ have no singletons.

Continuing this line of reasoning, at each step we have either $\spann(e_{r},e_{r+1}) \subseteq W_J$ or a projection $P_z$ onto a subspace of $F_J$ where $P_zF_J$ is a tight frame with at least $z+1$ non-zero vectors.  Since our family of vectors is finite, we must have $\spann(e_{r},e_{r+1})\subseteq W_J$.
\end{proof}

\begin{theorem}\label{thm1}
If $\{W_k, {a_k}\}_{k=1}^K$ is a spectral tetris fusion frame in $\RR^M$, there exists a spectral tetris frame $F=\{f_n\}_{n=1}^N$ and a partition $\{J_k\}_{k=1}^K$ of $\{1,\ldots,N\}$ generating this fusion frame such that $\norm{f_n}={a_{k}}$ and $\inner{f_n}{f_{n'}}=0$, $n,n'\in J_k$ for each $k \in \{1,\ldots, K\}$.
\end{theorem}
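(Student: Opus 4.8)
The plan is to start from an arbitrary spectral tetris frame $G=\{g_n\}_{n=1}^{N'}$ and partition $\{I_k\}_{k=1}^K$ that generate the given fusion frame $\{W_k,a_k\}_{k=1}^K$, and then massage $G$ into a frame $F$ with the extra orthogonality and norm properties by two kinds of local moves, mirroring the two ``superficial'' operations observed in the example. The first move \emph{merges}: if $G_{I_k}$ contains two orthogonal singletons $g_n=a_k e_r$ and $g_{n'}=a_k e_{r'}$ (with $r<r'$, and these being the only entries of $G$ in those rows if possible) one would like to replace them by a $2\times 2$ block of the form \eqref{block}; but actually the direction we want is the reverse. The second move \emph{splits}: replace a single vector $f$ by two collinear vectors $\tfrac{1}{\sqrt 2}f,\tfrac{1}{\sqrt 2}f$ (or more generally any two collinear vectors summing in norm-square to $\|f\|^2$), which changes neither the frame operator nor the span of the block it sits in.

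The key structural input is Proposition \ref{prop1}: within each piece $G_{I_k}$, any doubleton whose corresponding $2\times2$ block is not entirely contained in $G_{I_k}$ must be orthogonal to every other vector of $G_{I_k}$. So I would first apply Proposition \ref{prop1} to identify the ``bad'' doubletons in each $G_{I_k}$ — those half-blocks $g_j$ with $g_{j+1}\notin G_{I_k}$. By the proposition each such $g_j$ is orthogonal to all of $G_{I_k}\setminus\{g_j\}$; moreover, since $G_{I_k}$ is a tight frame for $W_k$ with bound $a_k^2$ and $g_j$ is orthogonal to the span of the rest, $g_j$ must by itself be a tight frame for $\operatorname{span}(g_j)$ with bound $a_k^2$, forcing $\|g_j\|^2=a_k^2$. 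That already gives the norm conclusion for such vectors. For each bad doubleton $g_j$ I would then split it: along the row $r$ on which it lives, replace the single scalar entry of $g_j$ in row $r$ (it has a nonzero entry in row $r$ and in row $r+1$, but since it's orthogonal to $g_{j+1}$ and to everything else, and $G_{I_k}$ spans $W_k$ which contains $e_r$... here one argues $g_j$ is actually supported on a single coordinate after a rotation, or more directly: excise $g_j$ from the block and re-run the relevant two rows of STC so that those two rows are filled by singletons instead). The cleanest version: replace the block $A(x,a_j,a_{j+1})$ contributing $g_j,g_{j+1}$ to $G$ (over rows $r,r+1$) by the pair of singletons $\sqrt{x}\,e_r$ (kept, as the replacement of $g_j$, now a genuine singleton of norm $a_k$ after noting $x=a_k^2$) together with a singleton in row $r$ replacing $g_{j+1}$'s row-$r$ mass and a singleton $\sqrt{\,a_{j+1}^2-?\,}e_{r+1}$ — i.e. use that $A(x,\cdot,\cdot)$ with both row sums achieved by singletons is available exactly when the left inequality in \eqref{endrow} holds with equality, which can be arranged by the splitting move. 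Doing this for every bad doubleton in every $I_k$, and also splitting any doubleton $g_j\in G_{I_k}$ whose partner $g_{j+1}$ \emph{is} in $G_{I_k}$ but whose norm differs from $a_k$ into collinear pieces of norm $a_k$, produces a new spectral tetris frame $F=\{f_n\}_{n=1}^N$ with partition $\{J_k\}$ where every $f_n\in F_{J_k}$ has norm exactly $a_k$.

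It remains to get orthogonality of all $f_n,f_{n'}$ with $n,n'\in J_k$. By construction $F_{J_k}$ is still a tight frame for $W_k$ with bound $a_k^2$, all of its vectors have norm $a_k$, so $F_{J_k}$ is a \emph{unit-norm} tight frame for $W_k$ up to the scalar $a_k$; hence $|J_k|=\dim W_k$ exactly (since $\sum\|f_n\|^2 = a_k^2|J_k| = a_k^2\dim W_k$ by taking the trace of the tight frame operator $a_k^2 P_k$), so $F_{J_k}$ is in fact an orthogonal basis of $W_k$ scaled by $a_k$, which is precisely $\inner{f_n}{f_{n'}}=0$ for $n\neq n'$ in $J_k$. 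So the orthogonality is automatic once the norms are equal to $a_k$ and tightness is preserved — the whole content is the norm equalization.

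The main obstacle I expect is the bookkeeping in the splitting step: one must verify that after replacing a $2\times2$ STC block by singletons/collinear pairs the result is still a bona fide spectral tetris frame — i.e. the modified norm sequence and the fixed spectrum are still spectral tetris ready (Theorem \ref{ready}), and the modified matrix is still exactly what STC outputs (it is genuinely sparse, rows still orthogonal, etc.). Concretely: a bad doubleton $g_j$ lives alone on its two rows apart from its block-partner $g_{j+1}\notin G_{I_k}$ and possibly other vectors outside $I_k$; one has to check that the rest of $G$ is unaffected by the local surgery, which follows because STC processes columns left to right and the surgery preserves, column by column and row by row, all partial sums $\sum_{n\le \ell} \|f_n\|^2$ and $\sum_{m\le r}\lambda_m$ that control the algorithm. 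I would isolate this as a lemma (``splitting and merging preserve spectral tetris frames and their frame operators'') and then the theorem follows by finitely many applications.
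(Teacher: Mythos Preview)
Your framework matches the paper's --- start from an arbitrary generating spectral tetris frame $G$ with partition $\{I_k\}$, invoke Proposition~\ref{prop1}, perform local surgeries --- and your closing observation (tight with bound $a_k^2$ plus all norms equal to $a_k$ forces $|J_k|=\dim W_k$, hence an orthogonal basis) is a valid and slightly slicker alternative to the paper's direction (orthogonal plus tight $\Rightarrow$ equal norm). The gap is in the surgeries you propose to reach the equal-norm state. Splitting a vector into collinear pieces each of norm $a_k$ fails on two counts: $\|g_j\|^2/a_k^2$ need not be an integer, and more fatally, splitting can only \emph{increase} $|J_k|$, while your own counting argument requires $|J_k|=\dim W_k$ at the end. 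Since $G_{I_k}$ already spans $W_k$ you begin with $|I_k|\ge\dim W_k$, so any genuine splitting makes that final equality impossible. Your treatment of ``bad'' doubletons is also off: you correctly deduce from Proposition~\ref{prop1} that a half-block with partner outside $I_k$ is already orthogonal to the rest of $G_{I_k}$ and hence already has norm $a_k$, so no surgery is needed there; the subsequent claim $x=a_k^2$ confuses the block's row-sum $x$ with the column norm $b_j$. Singletons of the wrong norm are not addressed at all.

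The moves that actually work, and that the paper uses, are the two \emph{count-nonincreasing} ones: (i) when both columns $g_j,g_{j+1}$ of a $2\times2$ block lie in the same $I_k$ (which Proposition~\ref{prop1} guarantees whenever $g_j$ is non-orthogonal to something else in $I_k$), replace the prescribed norms $b_j,b_{j+1}$ by the row-sums $x,y$ so that STC outputs two disjointly-supported singletons in place of the block; (ii) merge any two collinear vectors in the same $I_k$ into a single vector. Iterating (i) and (ii), every $F_{J_k}$ becomes pairwise orthogonal, and then equal norms follow automatically from tightness. Both moves preserve the row norms and row inner products of $G_{I_k}$, so $G_{I_k}G_{I_k}^*$ is unchanged and the rest of the STC output is unaffected --- this is exactly the bookkeeping lemma you isolate in your final paragraph, just applied to the correct pair of moves.
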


\begin{proof}
Since $\{W_k, {a_k}\}_{k=1}^K$ is a spectral tetris fusion frame, there exists an ordering of the eigenvalues for the frame operator $\{\lambda_m\}_{m=1}^M$ and a sequence of norms $\{{b_{n}}\}_{n=1}^{N'}$ such that STC produces a frame $G=\{g_{n}\}_{n=1}^{N'}$, which along with a partition $\{J_{k}\}_{k=1}^K$ of $\{1,\ldots,N'\}$ generates the given fusion frame.  Leaving the eigenvalues unchanged, we will modify $\{{b_{n}}\}_{n=1}^{N'}$ so that STC constructs a frame with the properties we desire while generating the same fusion frame.  Since the frame operator is given by
\[
	GG^*= \sum_{k=1}^K G_{J_k}G^*_{J_k},
\]
we we may work within each $G_{J_k}$ individually.  We consider three cases.

Case I: Suppose $G_{J_k}$ were an orthogonal set.  Since each $G_{J_k}$ is also a tight frame for $W_k$ with tight frame bound $a_k^2$, this implies $G_{J_k}$ is also equal-norm with $\norm{f_n}={b_k}={a_k}$, $n\in J_k$.  In this case, there is nothing to change.

Case II: Suppose $G_{J_k}$ contains vectors $g_i$ and $g_{j}$ which are colinear with $i<j$.  We replace the norms ${b_i},{b_j}$ with the single norm $({b_i}^2+{b_j}^2)^{1/2}$ and run STC.  This produces same frame except with $g_i,g_j$ replaced by a single vector, say $f$, colinear with $g_i,g_j$ and with norm  $({b_i}^2+{b_j}^2)^{1/2}$.  We now have
\[
	G'_{J_k}=G_{J_k}\setminus \{g_i , g_j\} \cup \{f\}.
\]
Comparing $G_{J_k}$ and $G'_{J_k}$, it is not difficult to see row inner products and row norms are unchanged.  Thus
\begin{equation}\label{unchanged}
	G'_{J_k}(G'_{J_k})^*=G_{J_k}G_{J_k}^*.
\end{equation}

Case III: Suppose $G_{J_k}$ contains a pair of vectors $g_i,g_j$, $i<j$ which are neither orthogonal nor colinear.  Then $g_i$ or $g_j$ is a doubleton (or possibly both).  Without loss of generality, let $g_i$ be a doubleton and $g_i,g_{i+1}$ contain a block $A(x,b_i,b_{i+1})$.  By Proposition \ref{prop1}, $g_{i+1}\in G_{J_k}$.  Letting $y=a_i^2+a_{i+1}^2-x$, replace the norm ${b_i}$ with $x$ and ${b_{i+1}}$ with $y$.  Then spectral tetris produces the same frame except with $g_i,g_{i+1}$ replaced by new vectors, say $f_i$ and $f_{i+1}$, disjointly supported singletons with norms $x$ and $y$ respectively. We now have
\[
	G'_{J_k}=G_{J_k}\setminus \{g_i , g_{i+1}\} \cup \{f_i,f_{i+1}\}.
\]  
Since $A(x,b_i,b_{i+1})$ had orthogonal rows by construction, we observe the row inner products and row norms of $G'_{J_k}$ compared to $G_{J_k}$ are unchanged.  We again have (\ref{unchanged}).

By applying Case III we gain orthogonality of frame vectors, possibly at the cost of added colinearity; we remove colinearity by Case II.  By iteratively applying Cases II and III, we arrive at a fusion frame $\{W_k,{a_k^2}\}_{k=1}^K$ generated by a spectral tetris frame $F=\{f_n\}_{n=1}^N$ and a partition $\{J_k\}_{k=1}^K$ such that each $F_{J_k}$ satisfies Case I.  That is, each $F_{J_k}$ is an orthogonal equal-norm set.
\end{proof}

Since we have shown every spectral tetris fusion frame can be generated by partitioning a spectral tetris frame into equal norm, orthogonal vectors, we are now able to give necessary and sufficient conditions for constructing fusion frames via spectral tetris.

\begin{theorem} \label{necessary and sufficient}
Let $\{{a_k}\}_{k=1}^K$ be a sequence of weights, $\{\lambda_m\}_{m=1}^M$ a sequence of eigenvalues, and $\{d_k\}_{k=1}^K$ a sequence of dimensions.  Let $N=\sum_{k=1}^{K}d_k$, and now consider each ${a_k}$ repeated $d_k$ times.  We will use a double index to reference specific weights and a single index to emphasize the ordering:
\[
	\{a_{k,j}\}_{k=1,j=1}^{\ K,\ \ d_k}=\{{a}_n\}_{n=1}^N.
\]
Then spectral tetris can construct a fusion frame whose subspaces have the given weights and dimensions, and whose frame operator has the given spectrum if and only if there exists a spectral-tetris-ready permutation of $\{{a}_n\}_{n=1}^N$ and $\{\lambda_m\}_{m=1}^M$, say $\{{a}_{\sigma n}\}_{n=1}^N$ and $\{\lambda_{\sigma'm}\}_{m=1}^M$  whose associated partition $1\leq n_1,\leq \cdots, \leq n_M = N$ satisifes
\begin{enumerate}[(A)]
	\item if $\sum_{n=1}^{n_k}{a}_{\sigma n}^2 < \sum_{m=1}^k\lambda_{\sigma' m}$, then \label{A}			
			\begin{enumerate}
				\item if $\sum_{n=1}^{n_{k+1}}{a}_{\sigma n}^2 < \sum_{m=1}^{k+1}\lambda_{\sigma' m}$, then for $a_{i,j},a_{p,q}\in \{{a}_{\sigma n}\}_{n=n_k}^{n_{k+1}+1}$, $j\neq q$ \label{Aa}
				\item if $\sum_{n=1}^{n_{k+1}}{a}_{\sigma n}^2 = \sum_{m=1}^{k+1}\lambda_{\sigma' m}$, then for $a_{i,j},a_{p,q}\in \{{a}_{\sigma n}\}_{n=n_k}^{n_{k+1}}$, $j\neq q$ \label{Ab}
			\end{enumerate}
	\item if $\sum_{n=1}^{n_{k}}{a}_{\sigma n}^2 = \sum_{m=1}^k\lambda_{\sigma' m}$, then \label{B}
			\begin{enumerate}
				\item if $\sum_{n=1}^{n_{k+1}}{a}_{\sigma n}^2 < \sum_{m=1}^{k+1}\lambda_{\sigma' m}$, then for $a_{i,j},a_{p,q}\in \{{a}_{\sigma n}\}_{n=n_k+1}^{n_{k+1}+1}$, $j\neq q$ \label{Ba}
				\item if $\sum_{n=1}^{n_{k+1}}{a}_{\sigma n}^2 = \sum_{m=1}^{k+1}\lambda_{\sigma' m}$, then for $a_{i,j},a_{p,q}\in \{{a}_{\sigma n}\}_{n=n_k+1}^{n_{k+1}}$, $j\neq q$ \label{Bb}
			\end{enumerate}
\end{enumerate}
for all $k=1,\ldots,M-1$.
\end{theorem}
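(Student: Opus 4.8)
The plan is to strip the statement down to a combinatorial fact about how the columns of a spectral tetris frame may be grouped, and then to verify that (\ref{A})--(\ref{B}) are exactly that fact. First I would combine Theorems~\ref{thm1}, \ref{tight} and \ref{ready}. By Theorem~\ref{thm1} together with the definition of a spectral tetris fusion frame, spectral tetris constructs a fusion frame with the prescribed weights, dimensions and spectrum if and only if there is a spectral tetris frame $F=\{f_n\}_{n=1}^N$ whose frame operator has spectrum $\{\lambda_m\}_{m=1}^M$ and a partition $\{J_k\}_{k=1}^K$ of $\{1,\dots,N\}$ for which each $F_{J_k}$ is an orthogonal family with $\norm{f_n}=a_k$ for $n\in J_k$ and $|J_k|=d_k$; the ``if'' part uses Theorem~\ref{tight}, since an orthogonal set of vectors of norm $a_k$ is a tight frame with bound $a_k^2$ for its span. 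By Theorem~\ref{ready} the frames $F$ available to us are precisely those arising from a permutation of $\{a_n\}_{n=1}^N$ and $\{\lambda_m\}_{m=1}^M$ that is spectral tetris ready, and once such a permutation $\{a_{\sigma n}\},\{\lambda_{\sigma'm}\}$ is fixed, the only candidate partition is the one grouping the columns of $F$ by the first coordinate of the double index of $a_{\sigma n}$ (when several subspaces share a weight value, the choice of which copies serve which subspace is folded into the labeling and $\sigma$). So the theorem reduces to: some spectral-tetris-ready permutation makes this double-index grouping orthogonal $\iff$ (\ref{A})--(\ref{B}).

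Second, I would read the geometry of $F$ off $\sigma,\sigma'$ and the associated partition $1\le n_1\le\cdots\le n_M=N$. Each column of $F$ is either a singleton $a_{\sigma n}e_r$ or one of the two columns of a $2\times 2$ block of the form (\ref{block}) supported on two consecutive rows $r,r+1$, whose rows are orthogonal. From the definition of spectral tetris readiness and the STC table I would prove the bookkeeping lemma: a block straddles rows $k,k+1$, occupying columns $n_k+1,n_k+2$, exactly when $\sum_{n\le n_k}a_{\sigma n}^2<\sum_{m\le k}\lambda_{\sigma'm}$, while in the equality case column $n_k$ finishes row $k$ as a singleton and column $n_k+1$ opens row $k+1$. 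From this I can list, for each $k$, the columns whose support meets row $k+1$: the columns of a block straddling $k,k+1$ (if any), the singletons in row $k+1$, and the columns of a block straddling $k+1,k+2$ (if any). The four combinations of ``block straddles $k,k+1$ or not'' and ``block straddles $k+1,k+2$ or not'' are exactly the hypotheses of cases (\ref{Aa}), (\ref{Ab}), (\ref{Ba}), (\ref{Bb}); matching them, I would check that the column set relevant to row $k+1$ is precisely the displayed index window, the left endpoint dropping from $n_k$ to $n_k+1$ when no block straddles $k,k+1$, and the right endpoint dropping from $n_{k+1}+1$ to $n_{k+1}$ when no block straddles $k+1,k+2$ (only one column of the block straddling $k+1,k+2$ enters row $k+1$'s conflicts, its partner being deferred to row $k+2$).

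Third comes the orthogonality heart. Two distinct columns of $F$ are non-orthogonal if and only if their supports share a row and they are not the two columns of a single block; and by Proposition~\ref{prop1}, if a doubleton lies in some $F_{J_k}$ and is non-orthogonal to another member of $F_{J_k}$, then its block partner lies in $F_{J_k}$ too. Using the previous step I would verify that, modulo this last reduction, every non-orthogonal pair of columns lying in a common $F_{J_k}$ can be replaced by a non-orthogonal pair lying inside one of the windows; conversely conditions (\ref{A})--(\ref{B}) amount to saying that within each window no two columns bound for the same subspace fail to be orthogonal (a same-subspace pair being admissible only when it forms a $2\times 2$ block). With both in place, the ``$\Leftarrow$'' direction is immediate: run STC, form the double-index grouping, and (\ref{A})--(\ref{B}) force each $F_{J_k}$ to be orthogonal and equal-norm, whence $\{W_k=\spann F_{J_k},a_k\}$ is the desired fusion frame by Theorem~\ref{tight}. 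The ``$\Rightarrow$'' direction is the contrapositive: a failure of (\ref{A})--(\ref{B}) exhibits, possibly after pulling in a block partner via Proposition~\ref{prop1}, two non-orthogonal columns forced into the same $F_{J_k}$.

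The real obstacle is the combined content of the second and third paragraphs: pinning down exactly which columns meet row $k+1$ and which pairs among them are non-orthogonal, and then keeping the boundary bookkeeping honest --- why $n_k$ rather than $n_k+1$ is the left endpoint precisely when a block straddles rows $k,k+1$, why $n_{k+1}+1$ rather than $n_{k+1}+2$ is the right endpoint when a block straddles rows $k+1,k+2$, and how Proposition~\ref{prop1} must be invoked to catch conflicts involving the far column of a block --- so that the combinatorial window lines up with (\ref{A})--(\ref{B}) sub-case by sub-case. A smaller but genuine subtlety is the bookkeeping in the first paragraph when several subspaces share a weight value, where one must check that ranging over permutations and labelings really does range over all admissible partitions.
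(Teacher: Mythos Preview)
Your overall architecture matches the paper's: reduce via Theorems~\ref{ready}, \ref{tight}, and \ref{thm1} to the question of when the double-index grouping of a spectral tetris frame is orthogonal, then read conditions (\ref{A})--(\ref{B}) as exactly this orthogonality requirement, window by window. The paper's own proof is much terser than your outline but proceeds the same way.

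There is, however, one genuine error in your third paragraph. You assert that two distinct columns of $F$ are non-orthogonal if and only if their supports share a row \emph{and they are not the two columns of a single block}, and you then allow a same-subspace pair when it forms a $2\times 2$ block. This is false: the two columns of a block of the form~(\ref{block}) are \emph{not} orthogonal in general. A direct computation from lines 14--15 of the STC table (or from the unit-norm block $A(x)$, where the inner product is $x-\tfrac12$) gives
\[
\langle f_n,f_{n+1}\rangle=\sqrt{(a_n^2-y)(\lambda_m-a_n^2)}\neq 0
\]
generically. The paper is explicit about this: it calls the window ``the maximal set of \emph{pairwise non-orthogonal} vectors,'' and the conditions (\ref{Aa})--(\ref{Bb}) therefore forbid the two block columns from carrying the same subspace label. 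Your weaker reading of (\ref{A})--(\ref{B}) and your wrong orthogonality claim happen to cancel in the ``$\Leftarrow$'' direction, but in ``$\Rightarrow$'' your argument would miss the case where the only violation of (\ref{A})--(\ref{B}) is a same-subspace block pair.

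A smaller point: your appeal to Proposition~\ref{prop1} in the third paragraph is redundant. Once you have invoked Theorem~\ref{thm1} to reduce to a spectral tetris frame whose partition blocks $F_{J_k}$ are already orthogonal equal-norm sets, there are no non-orthogonal same-subspace pairs left to analyze, and the contrapositive for ``$\Rightarrow$'' is immediate: a failure of (\ref{A})--(\ref{B}) exhibits two columns in one window with the same first index, hence two non-orthogonal vectors forced into the same $F_{J_k}$, contradicting the orthogonality just assumed. Proposition~\ref{prop1} has already done its work inside the proof of Theorem~\ref{thm1}.
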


\begin{proof}
Notice $(\ref{A})$ applies when STC inserts a block at norm $a_{\sigma n_k}$.  Then $(\ref{Aa})$ applies if a block is also needed at $a_{\sigma n_{k+1}}$.  In this case, $\{{f}_{\sigma n}\}_{n=n_k}^{n_{k+1}+1}$ is the maximal set of pairwise non-orthogonal vectors containing $f_{{\sigma n_k}}$.  Thus the statement $(\ref{Aa})$ requires the corresponding set of norms $\{{a}_{\sigma n}\}_{n=n_k}^{n_{k+1}+1}$ contains no two norms $a_{i,j},a_{p,q}$ where $j=q$.  
Then $(\ref{Ab}),(\ref{Ba})$, and ($\ref{Bb})$ are simlilar statements which cover the remaining cases for when STC could insert blocks or singletons at $a_{\sigma n_k}$ and $a_{\sigma (n_{k+1})}$.  All together these statments simply state if STC constructs a frame from norms $\{{a}_{\sigma n}\}_{n=1}^N$ and eigenvalues $\{\lambda_{\sigma'm}\}_{m=1}^M$, and $f,g$ are two frame elements associated with norms $a_{i,j},a_{p,q}$ with $j=q$, then $\inner{f}{g}=0$.

With this in mind, the result follows since a spectral-tetris-ready ordering is necessary and sufficient to build a spectral tetris frame with the desired spectrum by Theorem \ref{ready}.  Then a spectral tetris frame with orthogonality amoung equal-norm vectors is clearly sufficient to generate the desired fusion frame by Theorem \ref{tight} (see (\ref{basis}) in particular).  Finally, the existence of such a spectral tetris frame is also necessary by Theorem \ref{thm1}.
\end{proof}


\section{Concrete Construction}\label{concrete}
While Theorem \ref{necessary and sufficient} provides precise conditions for when a fusion frame can be constructed via spectral tetris, it is difficult to apply for actual constructions.  Indeed, given a spectrum for a fusion frame operator and a sequence of weights repeated appropriately for subspace dimensions, how does one find an appropriate spectral-tetris-ready ordering?  In this section we consider several special cases in which an appropriate ordering may be found easily leading to concrete constructions.

We begin with a proposition which demonstrates how we may obtain a spectral-tetris-ready ordering when the given norms are small compared to the prescribed eigenvalues.

\begin{proposition} \label{gen}
Given a sequence of norms $\{{a_n}\}_{n=1}^N$, and a sequence of eigenvalues $\{\lambda_m\}_{m=1}^M$ where $\sum_{n=1}^N a_n^2 = \sum_{m=1}^M \lambda_m$, if 
\begin{equation} \label{allow}
	\max_{i,j\in\{1,\ldots,N\}}(a_i^2+a_j^2)\leq \min_{m\in \{1,\ldots,M\}}\lambda_m,
\end{equation}
then the sequences can be made spectral-tetris-ready by systematically switching adjacent weights.
\end{proposition}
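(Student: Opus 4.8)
The plan is to reduce everything to the Schur--Horn-type bookkeeping in the definition of ``spectral tetris ready'' and to produce the required permutation by simply \emph{sorting}: reorder $\{a_n\}_{n=1}^N$ into non-decreasing order of $a_n^2$ (equivalently, since the weights are positive, non-decreasing order of $a_n$), and leave the eigenvalues $\{\lambda_m\}_{m=1}^M$ in whatever order they are given — hypothesis \eqref{allow} and the identity $\sum a_n^2=\sum\lambda_m$ depend only on the underlying multisets, so they persist under this rearrangement and the $\lambda_m$ need no reordering at all. Sorting a finite list into non-decreasing order is carried out by finitely many transpositions of adjacent entries (bubble sort), which is exactly the ``systematic switching of adjacent weights'' asserted in the statement. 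It then suffices to check that the sorted sequences meet conditions (a) and (b) in the definition of spectral-tetris-ready.

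For the verification I would introduce the partial sums $S_j=\sum_{n=1}^j a_n^2$ and $T_k=\sum_{m=1}^k\lambda_m$, and let $n_k$ be the largest index $j$ with $S_j\le T_k$; the equality $\sum a_n^2=\sum\lambda_m$ forces $n_M=N$, and condition (a), which reads $S_{n_k}\le T_k<S_{n_k+1}$, is immediate from the maximality of $n_k$ as soon as $n_k<N$. The substance is condition (b): when the overhang $r_k:=T_k-S_{n_k}$ is strictly positive, one must show $n_{k+1}-n_k\ge 2$ and $a_{n_k+2}^2\ge r_k$. The reason non-decreasing order is the right choice is that maximality of $n_k$ already gives $a_{n_k+1}^2=S_{n_k+1}-S_{n_k}>T_k-S_{n_k}=r_k$, and then monotonicity of the sorted sequence hands us $a_{n_k+2}^2\ge a_{n_k+1}^2>r_k$, settling the second requirement with room to spare (this is also exactly where the \emph{decreasing} order would fail). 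For the first requirement, \eqref{allow} gives $a_{n_k+1}^2+a_{n_k+2}^2\le\min_m\lambda_m\le\lambda_{k+1}$, whence $S_{n_k+2}\le T_k+\lambda_{k+1}=T_{k+1}$ and so $n_{k+1}\ge n_k+2$.

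The step I expect to be the main obstacle is not either inequality above, but justifying that the $n_k$ form a \emph{bona fide} strictly increasing partition $1\le n_1<\cdots<n_M=N$ with enough slack for the displays in (a) and (b) to even make sense — i.e.\ that $n_k+1\le N$ for all $k\le M-1$ and $n_k+2\le N$ whenever there is overhang at step $k$. This is precisely where \eqref{allow} does real work beyond the trivial instance $i=j$: for $k\le M-1$,
\[
	\sum_{n=n_k+1}^{N} a_n^2 \;=\; T_M-S_{n_k} \;\ge\; T_M-T_k \;=\; \sum_{m=k+1}^{M}\lambda_m \;\ge\; (M-k)\min_m\lambda_m \;\ge\; 2\max_i a_i^2 ,
\]
with strict inequality in the middle step when $S_{n_k}<T_k$; since every remaining $a_n^2$ is at most $\max_i a_i^2$, there are at least two indices past $n_k$ in general and at least three whenever there is overhang, which forces $n_k<n_{k+1}$ for every $k$ and gives $n_k+2\le N$ in the overhang case. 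Assembling the pieces: $n_M=N$ and condition (a) hold automatically, condition (b) is vacuous unless there is overhang and is verified by the two inequalities of the previous paragraph when there is, so the sorted sequences are spectral-tetris-ready and the proposition follows.
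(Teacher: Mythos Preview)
Your proof is correct, but it takes a genuinely different route from the paper's. You globally pre-sort the norms into non-decreasing order (via bubble sort, which is indeed a sequence of adjacent transpositions) and then verify the spectral-tetris-ready conditions directly; the monotonicity step $a_{n_k+2}^2\ge a_{n_k+1}^2>r_k$ is the clean heart of your argument, and your bookkeeping on the well-definedness of the partition is careful and right. The paper, by contrast, argues \emph{on-the-fly}: it runs STC on the given ordering, and whenever the algorithm cannot form the required $2\times2$ block at column $\ell$ (i.e.\ $x\ge a_{\ell+1}^2$), it swaps $a_\ell\leftrightarrow a_{\ell+1}$, observes that STC now inserts a singleton instead, and repeats until a valid block appears --- existence of such a column is forced by the trace condition, and the hypothesis $\max(a_i^2+a_j^2)\le\min\lambda_m$ guarantees that the overhang $y$ never exceeds the next eigenvalue.

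Your approach is arguably cleaner as a proof of the proposition \emph{per se}. What the paper's approach buys is the STR procedure (Table~\ref{fig:MSTC}) that is used immediately afterward in Theorem~\ref{construct}: there the initial ordering of the norms is chosen deliberately to space repeated weights far apart (condition~(\ref{cc})), because the fusion-frame construction needs vectors with the same norm to land in orthogonal positions. A global sort would clump all copies of each $a_k$ together and destroy exactly that spacing; the paper's local, on-demand swaps perturb the ordering as little as possible, and the proof of Theorem~\ref{construct} then tracks how far any single weight can drift under STR. So while your argument settles the proposition, it would not plug into the downstream constructions without substantial reworking.
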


\begin{proof}
Using the sequences $\{a_n\}_{n=1}^N$ and $\{\lambda_m\}_{m=1}^M$, we need to show if STC has its cursor at $(m,n)$ and cannot continue, interchanging $a_{n}$ and $a_{n+1}$ allows the algorithm to proceed.

Suppose the cursor is in row $r$.  STC will insert singletons until column $\ell$ when (\ref{endrow}) is satisfied.  Recall if the first inequality holds as an equality in (\ref{endrow}), STC inserts a singleton and the cursor proceeds to row $r+1$; the algorithm continues to run.  If (\ref{endrow}) does not hold as on equality, STC will attempt to create a $2\times 2$ block, $A=A(x,a_{\ell+1},a_{\ell+2})$ of the form (\ref{block}).  We must ensure such an $A$ exists and this $A$ allows STC to continue.

Assuming for the moment such a block exists, STC inserts $A$ and the cursor moves to row $r+1$.  STC continues as long $y=a_\ell^2+{a_{\ell+1}^2}-x \leq \lambda_{r+1}$, but this is always satisfied since
\[
	y\leq a_{\ell}^2+a_{\ell+1}^2\leq  \max_{i,j\in\{1,\ldots,N\}}(a_i^2+a_j^2)\leq \min_{m\in \{1,\ldots,M\}}\lambda_m \leq \lambda_{r+1}.
\]
Thus we need only concern ourselves with the existence of $A(x,a_\ell,a_{\ell+1})$.  That is, we must satisfy (\ref{bexist1}) and (\ref{bexist2}).

Note $x<a_{\ell}^2$ for otherwise
\[
	\sum_{n=1}^{\ell}a_n^2\leq \sum_{n=1}^{\ell-1}a_n^2 +x =  \sum_{m=1}^r \lambda_m
\]
violating (\ref{endrow}).  Thus (\ref{bexist1}) is always satisfied, and (\ref{bexist2}) holds if we also have $x< a_{\ell+1}^2$.  This implies STC can insert a block if
\begin{equation}\label{blockex}
	\sum_{m=1}^r \lambda_m=\sum_{n=1}^{\ell-1}a_n^2+x< \sum_{n=1}^{\ell-1} a_n^2 + a_{\ell+1}^2.
\end{equation}
Now suppose such a block does not exist so that $\sum_{m=1}^r \lambda_m \geq \sum_{n=1}^{\ell-1} a_n^2 + a_{\ell+1}^2$.  Taking the original sequence $\{a_n\}_{n=1}^N$ and interchanging norms $a_{\ell}, a_{\ell+1}$, we re-index the new ordering as $\{a_m'\}_{m=1}^M$ and have
\[
	\sum_{n=1}^{\ell} (a'_n)^2 = \sum_{n=1}^{\ell-1} a_n^2 +a_{\ell+1}^2 \leq \sum_{m=1}^r \lambda_m \mbox{ and } \sum_{m=1}^r \lambda_m < \sum_{n=1}^{\ell+1} a_n^2 = \sum_{n=1}^{\ell+1} (a'_n)^2.
\]
STC now inserts the singleton $a_{\ell}'$ at $(r,\ell)$ since the block is now required at column $\ell+1$ according to (\ref{endrow}).  The cursor continues to $(r, \ell+1)$ where STC now attempts to insert a block $A'=A'(x',a'_{\ell+1}, a'_{\ell+2})$.  Similar to (\ref{blockex}), $A'$ exists if
\[
	\sum_{m=1}^r \lambda_m < \sum_{n=1}^{\ell} (a'_n)^2 + (a'_{\ell+2})^2= \sum_{n=1}^{\ell-1} a_n^2 + a_{\ell+1}^2+ a_{\ell+2}^2.
\]
If $A'$ also fails to exist, switch $a_{\ell+1}'$ with $a_{\ell+2}'$, re-index, and STC will insert another singleton.  Continuing this line of reasoning, an appropriate block will exist when the cursor reaches column $(\ell+p)$ if
\begin{equation}\label{blockexist}
	\sum_{m=1}^r \lambda_m < \sum_{n=1}^{\ell-1} a_n^2 + \sum_{n=\ell+1}^{\ell+p}a_n^2= \sum_{n=1}^{\ell+p}a_n^2 - a_{\ell}^2.
\end{equation}
Finally, such a $p$ must exist due to the trace condition $\sum_{m=1}^M \lambda_m = \sum_{n=1}^N a_n^2$.
\end{proof}

Proposition \ref{gen} is a modification of STC which allows the algorithm to handle non-spectral-tetris ready orderings.  Moreover, the process is incredibly simple to implement: insert Table \ref{fig:MSTC} between lines 13 and 14 in the STC algorithm (Table \ref{fig:STC}).  For convenience, we will refer to this procedure as spectral tetris re-ordering (STR).

\begin{table}[h]
\centering
\framebox{
\begin{minipage}[h]{6.0in}
\vspace*{0.3cm}
{\sc \underline{STR: Spectral Tetris Re-Ordering Procedure}}

Call procedure between lines 13 and 14 of STC.
\vspace*{0.4cm}

{\bf Parameters:}\\[-3ex]
\begin{itemize}
\item Dimension $M\in\NN$.
\item Number of frame elements $N\in\NN$.
\item Eigenvalues $\{\lambda_m\}_{m=1}^M$  and vector norms $\{a_n\}_{n=1}^N$ such that 
$\sum_{n=1}^N a_n^2 = \sum_{m=1}^M \lambda_m$ and $\max_{i,j\in\{1,\ldots,N\}}(a_i^2+a_j^2)\leq \min_{m\in \{1,\ldots,M\}}\lambda_m$
\end{itemize}

{\bf Algorithm:}\\[-3ex]
\begin{itemize}
\item[1)] If $\lambda_m>a_{n+1}^2,$ then
\item[2)] \hspace*{0.5cm} temp$=a_n$.
\item[3)] \hspace*{0.5cm} $a_{n+1}=a_n$.
\item[4)] \hspace*{0.5cm} $a_{n+1}=$temp.
\item[5)] \hspace*{0.5cm} Go to STC (5).
\item[6)] end.
\end{itemize}
\end{minipage}
}
\vspace*{0.2cm}
\caption{Procedure for running STC on a non-spectral-tetris-ready ordering}
\label{fig:MSTC}
\end{table}

With a procedure that always results in a spectral tetris ready ordering, we next find sufficient conditions for when STC/STR runs and maintains orthogonality conditions $(\ref{Aa},\ref{Ab},\ref{Ba},\ref{Bb})$ from Theorem \ref{necessary and sufficient}.

\begin{theorem} \label{construct}
Consider $\RR^M$ and a sequence of weights ${a_1}\leq {a_2}\leq \ldots \leq {a_k}$ with corresponding subspace dimensions $\{d_k\}_{k=1}^K$, and a sequence of eignvalues $\lambda_1\leq \ldots \leq \lambda_M$.  Let the doubley indexed sequence $\{a_{k,j}\}_{k=1,j=1}^{\ K,\ \ d_k}$ represent $a_k$ each repeated $d_k$ times.  Now STC/STR will build a weighted fusion frame $\{W_k,a_k\}_{k=1}^K$, $\dim(W_k)=d_k$ whose frame operator has the given spectrum if there exists an ordering $\{a_n\}_{n=1}^N$ of $\{a_{k,j}\}_{k=1,j=1}^{\ K,\ \ d_k}$ such that
\begin{enumerate}[(a)]
	\item $\sum_{n=1}^N a_n^2=\sum_{m=1}^M \lambda_m$ \label{aa}
	\item $a_{K-1,1}^2+a_{K,1}^2 \leq \lambda_1$ \label{bb}
	\item If $a_{\ell}=a_{k,j}$, $a_{\ell'}=a_{k',j'}$ with $j=j'$ and $\ell<\ell'$, then $\sum_{n=\ell}^{\ell'-1}a_n^2\geq 2 \lambda_M$ \label{cc}.
\end{enumerate}
\end{theorem}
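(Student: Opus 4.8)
The plan is to feed the ordering $\{a_n\}_{n=1}^N$ supplied by hypotheses (\ref{aa})--(\ref{cc}) into STC/STR, obtain a spectral tetris frame $F=\{f_n\}_{n=1}^N$ whose frame operator has spectrum $\{\lambda_m\}_{m=1}^M$, and then show that the partition $\{J_k\}_{k=1}^K$ which groups the frame vectors coming from the $d_k$ copies of the weight $a_k$ consists of orthogonal, equal-norm sets. Theorem \ref{tight} then upgrades $F$ together with $\{J_k\}_{k=1}^K$ to a fusion frame $\{W_k,a_k\}_{k=1}^K$ with $W_k=\spann(F_{J_k})$, $\dim W_k=d_k$, and the prescribed spectrum, which is exactly the claim.

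I would first check that STC/STR runs to completion. Hypothesis (\ref{aa}) is the trace condition required to even begin. Hypothesis (\ref{bb}), together with the ordering $a_1\le\cdots\le a_K$, bounds $a_i^2+a_j^2\le\lambda_1=\min_m\lambda_m$ for every pair of weights the algorithm can actually attempt to place in a common $2\times2$ block --- two copies of a single weight are never such a pair, since by (\ref{cc}) they are separated by accumulated square-norm at least $2\lambda_M$ and so can never sit in adjacent columns. Thus the hypotheses of Proposition \ref{gen} are in force, STR resolves every stalled cursor by an adjacent interchange, the algorithm terminates, and (invoking Theorem \ref{ready}) the output $F$ is a genuine spectral tetris frame whose frame operator has spectrum $\{\lambda_m\}_{m=1}^M$ and whose norms $\{\norm{f_n}\}_{n=1}^N$ are a permutation of $\{a_n\}_{n=1}^N$.

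The structural heart of the argument is a sparsity estimate: in any spectral tetris frame, if $f_i$ and $f_j$ with $i<j$ have $\inner{f_i}{f_j}\ne0$, then only a little square-norm separates their columns. Such vectors must share a row $s$; because STC fills rows from top to bottom and columns from left to right, this forces every column from $i$ through $j$ to be supported within rows $s-1,s,s+1$, and a short accounting of how much of rows $s-1$ and $s$ each intervening block and singleton consumes gives $\sum_{n=i}^{j-1}\norm{f_n}^2$ strictly less than the sum of two consecutive eigenvalues, hence $<2\lambda_M$. Now apply (\ref{cc}): any two frame vectors built from copies of the same weight $a_k$ carry accumulated square-norm at least $2\lambda_M$ between them, so by the contrapositive of the estimate they are orthogonal. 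Taking $J_k$ to be the positions of the $d_k$ copies of $a_k$, the family $F_{J_k}$ is an orthogonal set of $d_k$ vectors of common norm $a_k$, whence $F_{J_k}F_{J_k}^{*}=a_k^2 P_k$ with $P_k$ the orthogonal projection onto $W_k=\spann(F_{J_k})$ and $\dim W_k=d_k$; equivalently $F_{J_k}$ is a tight frame for $W_k$ with bound $a_k^2$. Theorem \ref{tight} now delivers the fusion frame.

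The step I expect to be the real obstacle is that (\ref{cc}) constrains the \emph{input} ordering $\{a_n\}_{n=1}^N$, whereas the sparsity estimate must be applied to the norms in the order STR actually lays them down; the same caveat affects the claim in the second paragraph that no same-weight pair is ever placed adjacently. Since STR only performs adjacent transpositions near the cursor, one must show these moves are too local to close a gap of accumulated square-norm $2\lambda_M$ --- concretely, one tracks which norm migrates to the right at each STR step and uses (\ref{bb}) to bound the cursor's progress, verifying that for a same-weight pair the quantity $\sum_{n=\ell}^{\ell'-1}\norm{f_n}^2$ stays $\ge2\lambda_M$ throughout the run. Once this bookkeeping is in place the rest is immediate: Proposition \ref{gen} (with Theorem \ref{ready}) for termination, the sparsity estimate together with (\ref{cc}) for orthogonality, and Theorem \ref{tight} to pass from the frame to the fusion frame.
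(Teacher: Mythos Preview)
Your strategy is the same as the paper's: run STC/STR under (\ref{aa}) and (\ref{bb}), argue that vectors coming from the same weight are orthogonal, then invoke Theorem~\ref{tight}.  Your observation that (\ref{bb}) alone does not give the hypothesis of Proposition~\ref{gen} (since two copies of $a_K$ could have $2a_K^2>\lambda_1$) and that (\ref{cc}) is needed to rule out same-weight adjacency is in fact sharper than the paper's one-line appeal to Proposition~\ref{gen}.

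Where your write-up is looser than the paper is the ``sparsity estimate.''  You claim that $\inner{f_i}{f_j}\neq0$ forces $\sum_{n=i}^{j-1}\norm{f_n}^2$ to be strictly less than a sum of \emph{two} consecutive eigenvalues.  But the columns $i,\ldots,j-1$ can touch three rows $s-1,s,s+1$ (e.g.\ when $f_i$ lies in a block on rows $s-1,s$ and $f_{j-1}$ is the first column of a block on rows $s,s+1$), so a clean two-eigenvalue bound does not fall out without also invoking (\ref{bb}) to control the extra block contribution.  The paper does not try to isolate such an estimate; instead it combines (\ref{bb}) and (\ref{cc}) into the single working inequality
\[
\sum_{n=\ell}^{\ell'-1}a_n^2-(a_s^2+a_t^2)\ \ge\ 2\lambda_M-\lambda_1\ \ge\ \lambda_M
\]
for any $s,t\in\{\ell,\ldots,\ell'-1\}$, and then tracks the cursor explicitly through two cases (no STR shift versus an STR shift of length $p\ge2$), showing in each case that the cursor has left row $r+1$ before reaching the column carrying the next same-weight copy.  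Your final paragraph correctly identifies this STR bookkeeping as the crux and sketches it, but what you call ``too local to close a gap'' is exactly what the paper's Case~I/Case~II analysis makes precise; your version would need that same case split (or an equivalent) to be complete.
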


\begin{proof}
Due to the trace condition (\ref{aa}), condition (\ref{bb}), and Proposition \ref{gen}, STC/STR will take the orderings $\{a_n\}_{n=1}^N$, $\{\lambda_m\}_{m=1}^M$ and generate a frame $\{f_n\}_{n=1}^N$ with the desired spectrum.  In order to create subspaces $W_k$, we must show that given this generated frame, the final ordering of weights satisfies $(\ref{Aa},\ref{Ab},\ref{Ba},\ref{Bb})$ from Theorem \ref{necessary and sufficient}.  Recall these are orthogonality requirements that state any two vectors corresponding to a repeated weight are orthogonal.

Let $a_{\ell}=a_{k,j}$, $a_{\ell'}=a_{k',j'}$ with $j=j'$ and $\ell<\ell'$ be such a repeated weight.  By (\ref{bb}) and (\ref{cc}), if $s,t\in \{\ell,\ell+1,\ldots,\ell'-1\}$, then
\begin{equation}\label{sum}
	\sum_{n=\ell}^{\ell'-1}a_n^2 - (a_s^2+a_t^2)\geq 2\lambda_M-(a_s^2+a_t^2) \geq 2\lambda_M - \lambda_1 \geq \lambda_M.
\end{equation}
We must show $f_{\ell}$ and $f_{\ell'}$ are orthogonal.  Due to the sparcity of STC/STR in the eigenbasis of the frame operator, we need only show these vectors are disjointly supported.  Suppose the cursor is at $(r,\ell)$.  Worst case is if (\ref{endrow}) holds so that STC requires a block, for then $f_\ell$ shares support with any $f_n$ with a non-zero entries in rows $r$ or $r+1$.  By STR, such a block will be inserted over weights $a_\ell$ and $a_{\ell+p}$ for the smallest $p$ satisfying (\ref{blockexist}).   Now we must show nomatter how STR switches weights, when the cursor reaches the column associated with $a_{\ell'}$, the cursor is below row $r+1$.

Case I:  Assume (\ref{blockexist}) is satisfied for $p=1$ so that $a_{\ell}$ does not shift.  Notice (\ref{endrow}) and (\ref{sum}) combine to produce
\[
	\sum_{n=1}^{\ell'-2}a_n^2 = \sum_{n=1}^{\ell}a_n^2 + \sum_{n=\ell}^{\ell'-1}a_n^2 -(a_{\ell'-1}^2+a_{\ell})^2 \geq \sum_{m=1}^r\lambda_m + \lambda_M \geq \sum_{m=1}^{r+1}\lambda_m.
\]
Also, for any $a_s\in \{a_n\}_{n=\ell+1}^{\ell'-1}$, 
\[
	\sum_{n=1}^{\ell'-1}a_n^2 - a_s^2 = \sum_{n=1}^{\ell}a_n^2 + \sum_{n=\ell}^{\ell'-1}a_n^2 - (a_s^2 + a_\ell^2) \geq \sum_{m=1}^{r+1}\lambda_m.
\]
Relating these inequalities to (\ref{endrow}) and (\ref{blockexist}), the cursor exits row $r+1$ at or before weight $a_{\ell'-1}$.

Case II:  Suppose (\ref{blockexist}) is satisfied for some $p\neq 1$.  By (\ref{endrow}), (\ref{sum}), and (\ref{aa})
\[
	\sum_{n=1}^{\ell'-3}a_n^2-a_\ell^2=\sum_{n=1}^{\ell}a_n^2+\left(\sum_{n=\ell}^{\ell'-1}a_n^2 -(a_{\ell'-1}^2+a_{\ell'-2}^2)\right)-2a_\ell^2>\sum_{m=1}^r\lambda_m+\lambda_M-\lambda_1\geq \sum_{m=1}^r\lambda_m,
\]
and thus $p\leq \ell'-3$.  This implies the cursor exits row $r$ at or before $a_{\ell'-3}$.  If $p=\ell'-3$ there remains room for a $2\times 2$ block to exist across $a_{\ell'-2},a_{\ell'-1}$; such a block or a singleton still finishes row $r+1$ at or before $a_{\ell'-1}$ since the same inequalities from Case I hold.

Thus after running STC/STR, the orthogonality conditions $(\ref{Aa},\ref{Ab},\ref{Ba},\ref{Bb})$ from Theorem \ref{necessary and sufficient} are met.  We have a spectral tetris frame $\{f_n\}_{n=1}^N=\{f_{kj}\}_{k=1,j=1}^{\ K,\ \ d_k}$ generating the desired fusion frame $\{W_k,a_k\}_{k=1}^K$ by setting $W_k=\spann(\{f_{k,j}\}_{j=1}^{d_k})$.

\end{proof}

One may now ask how to order the weights to achieve (\ref{cc}).  Intuitively we would want to space like-weights as far apart as possible in order to maximize $\sum_{n=\ell}^{\ell'-1}a_n$.  In the case of fusion frames with equi-dimensional subspaces, the best spacing is obvious, and the hypotheses of Theorem \ref{construct} simplify.  We start with a tight fusion frame with equi-dimensional subspaces.

\begin{corollary} 
Consider $\RR^M$ and a sequence of weights ${a_1}\leq {a_2}\leq \ldots \leq {a_k}$.  STC/STR can constuct a tight weighted fusion frame with the given weights, all subspaces of dimension $d$, provided
\begin{enumerate}[(i)]
	\item \label{req1} $a_{K-1}^2+a_K^2 \leq \lambda$
	\item \label{req2} $d/M \leq 1/2$
\end{enumerate}
\end{corollary}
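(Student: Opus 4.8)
The plan is to specialize Theorem \ref{construct} to the case of a tight fusion frame with all subspaces of dimension $d$, and verify that the three hypotheses (\ref{aa}), (\ref{bb}), (\ref{cc}) follow from (\ref{req1}) and (\ref{req2}) once we choose the right ordering of the weights. Here the spectrum is $\lambda_1 = \cdots = \lambda_M = \lambda$, and since the frame operator trace must equal $\sum_{k=1}^K d\, a_k^2$, tightness forces $\lambda = \frac{d}{M}\sum_{k=1}^K a_k^2$; condition (\ref{aa}) is then automatic. Condition (\ref{bb}) reads $a_{K-1}^2 + a_K^2 \le \lambda_1 = \lambda$, which is exactly hypothesis (\ref{req1}). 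So the entire content of the proof is to exhibit an ordering $\{a_n\}_{n=1}^N$ of the list $\{a_{k,j}\}$ (each $a_k$ repeated $d$ times, $N = dK$) for which the spacing condition (\ref{cc}) holds, namely: whenever two entries equal to the same weight $a_k$ sit at positions $\ell < \ell'$, the intervening squares satisfy $\sum_{n=\ell}^{\ell'-1} a_n^2 \ge 2\lambda_M = 2\lambda$.

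First I would fix the ordering: list the weights in "round-robin" (column-major) fashion, i.e. go through $a_1, a_2, \ldots, a_K$ once, then again, $\ldots$, $d$ times total, so that $a_n = a_{k}$ when $n \equiv k \pmod K$. With this ordering, any two occurrences of the same weight $a_k$ are exactly $K$ apart, and more generally between position $\ell$ and the next occurrence of the same weight, the intervening block $a_\ell, a_{\ell+1}, \ldots, a_{\ell'-1}$ is precisely one full cycle $\{a_1, \ldots, a_K\}$ (in some rotation). Hence for consecutive occurrences,
\[
\sum_{n=\ell}^{\ell'-1} a_n^2 \;=\; \sum_{k=1}^K a_k^2,
\]
and for non-consecutive occurrences the sum is an even larger multiple of $\sum_{k=1}^K a_k^2$, so it suffices to check (\ref{cc}) for consecutive repeats. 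Thus (\ref{cc}) reduces to the single inequality $\sum_{k=1}^K a_k^2 \ge 2\lambda$.

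Now substitute $\lambda = \frac{d}{M}\sum_{k=1}^K a_k^2$: the inequality $\sum_{k=1}^K a_k^2 \ge 2\lambda$ becomes $\sum_{k=1}^K a_k^2 \ge \frac{2d}{M}\sum_{k=1}^K a_k^2$, i.e. $1 \ge \frac{2d}{M}$, which is exactly hypothesis (\ref{req2}), $d/M \le 1/2$. (Since all $a_k > 0$, the sum $\sum_k a_k^2$ is strictly positive and the division is legitimate.) Therefore, with the round-robin ordering, all three hypotheses of Theorem \ref{construct} are satisfied, and that theorem produces a spectral tetris frame $\{f_{k,j}\}$ generating the fusion frame $\{W_k, a_k\}_{k=1}^K$ with $W_k = \spann(\{f_{k,j}\}_{j=1}^d)$, $\dim W_k = d$, and fusion frame operator spectrum $\{\lambda, \ldots, \lambda\}$ — a tight weighted fusion frame, as claimed.

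The only mild subtlety — and the one place I would be careful — is the reduction in the second paragraph: I must confirm that for a repeated weight whose two occurrences are $mK$ apart ($m \ge 1$), the intervening window genuinely contains $m$ complete cycles of the weight list, so the intervening square-sum is $m\sum_k a_k^2 \ge \sum_k a_k^2 \ge 2\lambda$; this is immediate from the periodicity of the round-robin ordering but deserves an explicit sentence. Everything else is a direct bookkeeping substitution into Theorem \ref{construct}, so I do not anticipate a genuine obstacle here.
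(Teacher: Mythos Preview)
Your proposal is correct and matches the paper's proof essentially line for line: the paper also uses the round-robin ordering $a_1,\ldots,a_K,a_1,\ldots,a_K,\ldots$, checks that (\ref{aa}) is the trace condition $M\lambda=\sum_n a_n^2$, identifies (\ref{bb}) with (\ref{req1}), and then observes that any length-$K$ window sums to $\sum_{k=1}^K a_k^2 \ge \tfrac{2d}{M}\sum_{k=1}^K a_k^2 = 2\lambda$ by (\ref{req2}), yielding (\ref{cc}). Your extra sentence about non-consecutive repeats is a harmless elaboration the paper leaves implicit.
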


\begin{proof}
Begin by repeating $a_k$ each $d$ times:
\[
	\{a_n\}_{n=1}^N=a_1,a_2,\ldots,a_{K},a_1,a_2,\ldots,a_{K},\cdots,a_1,a_2,\ldots,a_{K}
\]
where $N=dK$.  In this tight-case, the trace condition requires $M\lambda=\sum_{n=1}^N a_n^2$ which is condition (\ref{aa}) of Theorem \ref{construct}.  Condition (\ref{req1}) is precisely condition (\ref{bb}) of Theorem \ref{construct}.  Finally by (\ref{req2}) and the trace condition, for any $j=1,\ldots,N-K+1$ we have
\begin{equation*}
	\sum_{n=j}^{j+K-1}a_{n}^2=\sum_{n=1}^Ka_n^2\geq 2\frac{d}{M} \sum_{n=1}^K a_{n} = 2\lambda,
\end{equation*}
which is the last condition, (\ref{cc}) of Theorem \ref{construct}.  Apply the theorem.
\end{proof}

If we drop the tight-frame requirement, the following corrolary is an obvious similar application of Theorem \ref{construct}.

\begin{corollary} \label{cor2}
Consider $\RR^M$, a sequence of weights, ${a_1}\leq {a_2}\leq \ldots \leq {a_k}$, and a sequence of eigenvalues $\lambda_1\leq \ldots \leq \lambda_M$.  STC/STR can construct a weighted fusion frame with the given weights, all subspaces dimension $d$, and with the given spectrum provided
\begin{enumerate}[(i)]
	\item $d\sum_{n=1}^K a_n^2=\sum_{m=1}^M \lambda_m$ \label{aaa}
	\item $a_{K-1}^2+a_K^2 \leq \lambda_1$ \label{bbb}
	\item $\sum_{n=1}^K a_n^2\geq 2\lambda_M$ \label{ccc}
\end{enumerate}
\end{corollary}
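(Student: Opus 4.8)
The plan is to derive Corollary \ref{cor2} as a direct specialization of Theorem \ref{construct}, exactly parallel to the tight-frame corollary just above it. First I would set up the doubly-indexed weight sequence by repeating each $a_k$ precisely $d$ times in the natural cyclic pattern
\[
	\{a_n\}_{n=1}^N = a_1,a_2,\ldots,a_K,\ a_1,a_2,\ldots,a_K,\ \cdots,\ a_1,a_2,\ldots,a_K,
\]
where $N=dK$. This ordering is chosen so that any two occurrences of the same weight $a_k$ are separated by exactly $K$ indices, which will make the gap sums in condition (\ref{cc}) easy to evaluate.

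Next I would verify the three hypotheses (\ref{aa}), (\ref{bb}), (\ref{cc}) of Theorem \ref{construct} one at a time. Condition (\ref{aa}), the trace condition $\sum_{n=1}^N a_n^2 = \sum_{m=1}^M \lambda_m$, is immediate from hypothesis (\ref{aaa}) since $\sum_{n=1}^N a_n^2 = d\sum_{k=1}^K a_k^2$. Condition (\ref{bb}), namely $a_{K-1,1}^2 + a_{K,1}^2 \leq \lambda_1$, is literally hypothesis (\ref{bbb}) once one unwinds the double-index notation (the first copies of $a_{K-1}$ and $a_K$). For condition (\ref{cc}): if $a_\ell = a_{k,j}$ and $a_{\ell'}=a_{k,j'}$ with $j=j'$ and $\ell<\ell'$ are the same weight appearing in two different subspace slots, then by the cyclic construction $\ell' - \ell$ is a multiple of $K$ and is at least $K$, so the block $\{a_n\}_{n=\ell}^{\ell'-1}$ contains at least one full period; hence
\[
	\sum_{n=\ell}^{\ell'-1} a_n^2 \;\geq\; \sum_{k=1}^K a_k^2 \;\geq\; 2\lambda_M,
\]
where the last inequality is hypothesis (\ref{ccc}). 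This is exactly the required bound $\sum_{n=\ell}^{\ell'-1}a_n^2 \geq 2\lambda_M$.

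With all three hypotheses of Theorem \ref{construct} checked, the conclusion follows: STC/STR builds a spectral tetris frame with the prescribed spectrum whose repeated-weight vectors are pairwise orthogonal, and partitioning it subspace-by-subspace yields the desired weighted fusion frame $\{W_k,a_k\}_{k=1}^K$ with $\dim W_k = d$. I do not anticipate a genuine obstacle here, since the corollary is stated as "an obvious similar application"; the only point requiring a modicum of care is the bookkeeping between the single index $n$ and the double index $(k,j)$ — specifically, confirming that the cyclic ordering really does force two copies of the same weight to be $K$ apart and hence that a whole period sits between them — but this is routine once the ordering is fixed. One could alternatively cite the preceding tight corollary's argument verbatim with $\lambda$ replaced throughout by $\lambda_M$ in the lower bound and $\lambda_1$ in the upper; I would instead just invoke Theorem \ref{construct} directly, as above, for cleanliness.
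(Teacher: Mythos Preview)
Your proposal is correct and matches the paper's intended argument: the paper gives no explicit proof of this corollary, merely labeling it ``an obvious similar application of Theorem~\ref{construct},'' and your write-up supplies precisely those details---the same cyclic ordering $a_1,\ldots,a_K,a_1,\ldots,a_K,\ldots$ used in the preceding tight corollary, followed by the direct check of conditions (\ref{aa}), (\ref{bb}), (\ref{cc}). The only wrinkle is notational: the condition ``$j=j'$'' in Theorem~\ref{construct}(\ref{cc}) is evidently a typo for ``$k=k'$'' (same subspace index, hence same weight value), as the paper's own proof of the tight corollary confirms by summing over a full period of length $K$; you have correctly read the intended meaning, and your argument that two occurrences of the same $a_k$ in the cyclic list are at least $K$ apart---so a full period sits between them---is exactly right.
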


\begin{remark}
In order for STC to build a desired fusion frame, a complex relationship amoung partial sums of weights, partial sums of eigenvalues, and dimensions of our subspaces must be satisfied according to Theorem \ref{necessary and sufficient}.  We simplified this relationship in Theorem \ref{construct} and its corollaries to achieve concrete constructions via STC/STR.  While these extra assumptions still allow a variety of fusion frames to be created, they are best suited for fusion frames with relatively flat spectrums.  For example, (\ref{aaa}) and (\ref{ccc}) of Corollary \ref{cor2} imply
\[
	\frac{\sum_{m=1}^M\lambda_m}{d}\geq 2\lambda_M,
\]
and this can clearly be manipulated to
\[
	\frac{\mbox{Average}(\{\lambda_m\}_{m=1}^M)}{2\lambda_M}\geq \frac{d}{M}.
\]
Hence if we desire STC/STR to garauntee the construction of fusion frames with relatively large subspaces, our prescribed frame operator must have a relatively flat spectrum.  However, the conditions
used here are of the correct order for practical applications.
That is, we generally do not work with large subspaces or with eigenvalues for a frame operator
which are very spread out.
\end{remark}

\end{document}